\documentclass{amsart}
\usepackage{amsmath}
\usepackage{amssymb}
\usepackage{amsfonts}
\usepackage[dvips]{graphicx,color}
\usepackage{amsthm}
\usepackage{amscd}
\usepackage{amsfonts, braket}
\usepackage[all]{xy}


\DeclareMathOperator{\Hom}{Hom}

\DeclareMathOperator{\Ker}{Ker}

\DeclareMathOperator{\Int}{Int}
\DeclareMathOperator{\Aut}{Aut}

\DeclareMathOperator{\Sp}{Sp}

\DeclareMathOperator{\ad}{ad}
\DeclareMathOperator{\Der}{Der}

\DeclareMathOperator{\aut}{aut}

\DeclareMathOperator{\sgn}{sgn}
\DeclareMathOperator{\Alt}{Alt}

\def\r{\mathbb{R}}
\def\c{\mathbb{C}}
\def\D{\mathfrak{D}}

\def\q{\mathbb{Q}}

\def\C{\mathcal{C}}

\def\z{\mathbb{Z}}

\def\cyc{\text{cyc}}

\def\id{\mathrm{id}}

\def\pt{\partial}
\def\ad{\text{ad}}

\def\ct{\hat{L}W}

\def\Ccom{C_{\text{com}}^{\bullet,\bullet}}
\def\Cass{C_\text{ass}^{\bullet,\bullet}}
\def\C{\Ccom(W)}
\def\tC{\hat{C}^{\bullet,\bullet}_{\text{com}}(W)}

\def\CE{C^{\bullet,\bullet}_{CE}}
\def\Sh{\mathrm{Sh}}
\def\Ush{\mathrm{Ush}}
\def\G{\mathcal{G}}

\def\com{\text{com}}
\def\cct{\hat{T}W}

\theoremstyle{definition}
  \newtheorem{thm}{Theorem}[section]
  
  \newtheorem{lem}[thm]{Lemma}

  \newtheorem{prop}[thm]{Proposition}

\theoremstyle{definition}
  \newtheorem{defi}[thm]{Definition}

  \newtheorem{rem}[thm]{Remark}
  \newtheorem{ex}[thm]{Example}

\title{Double graph complex and characteristic classes of fibrations}

\author{Takahiro Matsuyuki}
\address{Department of Mathematics, 
Tokyo Institute of Technology, 
2-12-1 Oh-okayama, Meguro-ku, Tokyo 152-8551, Japan.}
\email{matsuyuki.t.aa@m.titech.ac.jp}

\begin{document}
\maketitle
\begin{abstract}
In this paper, we construct a double chain complex generated by certain graphs and a chain map from that to the Chevalley-Eilenberg double complex of the dgl of symplectic derivations on a free dgl. It is known that the target of the map is related to characteristic classes of fibrations. We can describe some characteristic classes of fibrations whose fiber is a 1-punctured even-dimensional manifold by linear combinations of graphs though the cohomology of the dgl of derivations. 

\end{abstract}

\section{Introduction}

The Chevalley-Eilenberg complex of the limit of the Lie algebra of symplectic derivations on (graded) free Lie algebras is isomorphic to the graph complex defined by the cyclic Lie operad (details in \cite{K, K2, Con, Ham}). In this paper, we introduce an extension of (the dual of) the construction to a Lie algebra of symplectic derivations on free dgls. Let $(W,\omega)$ be a finite-dimensional graded vector space with symmetric inner product of even degree $N$ and $\delta$ a differential of degree $-1$ on the completed free Lie algebra $\hat{L}W$ satisfying the symplectic condition $\delta\omega=0$. An important example is the case that $(\hat{L}W,\delta)$ is a Chen's dgl model of an even dimensional manifold and $\omega$ is its intersection form. We construct a $W$-labeled graph complex $\C$ and a chain map
\[\C\to \CE(\Der_\omega(\hat{L}W))\]
to the Chevalley-Eilenberg (double) complex $\CE(\Der_\omega(\hat{L}W))$ of the differential graded Lie algebra $(\Der_\omega(\hat{L}W),\ad(\delta))$ of symplectic derivations on $\hat{L}W$. Furthermore, from the non-labeled part $\Ccom(N,Z)$ of the graph complex, which depends on only the integer $N$ and the set $Z$ of degrees of $W$, we can obtain a chain map
\[\Ccom(N,Z)\subset \C^{\Sp(W,\delta)}\to \CE(\Der_\omega(\hat{L}W))^{\Sp(W,\delta)},\]
where $\Sp(W,\delta)$ is the group of graded linear isomorphisms of $W$ preserving $\omega$ and $\delta$. In the case of $N=0$ and $Z=\{0\}$, the map corresponds to the Kontsevich's one \cite{K,K2}. 

The construction above gives characteristic classes of fibrations. It is known that characteristic classes of simply-connected fibrations are related to Lie algebras of derivations \cite{SS,Tanre}. In non-simply connected cases, we got relations between characteristic classes and Lie algebras of derivations as in \cite{MT,KMT}. In this paper, we consider the case that the boundary of a fiber is a sphere. For a simply-connected compact manifold $X$ with $\pt X=S^{n-1}$, let $\aut_\pt(X)$ be the monoid of self-homotopy equivalences of $X$ fixing the boundary pointwisely and $\aut_{\pt,0}(X)$ its connected component containing $\id_X$. According to \cite{Ber}, the isomorphism
\[H^\bullet(B\aut_{\pt,0}(X);\q)\simeq H^\bullet_{CE}(\Der^+_\omega(L_X))\]
is obtained. Here $L_X$ is a cofibrant dgl model of $X$. The underlying Lie algebra of $L_X$ is generated by the linear dual $W$ of the suspension of the reduced cohomology of $X$. So the graph complex above gives the invariant part of the cohomology $H^\bullet_{CE}(\Der^+_\omega(L_X))$ with respect to the action of the group $\Sp(W,\delta)$ of automorphisms of $W$ with intersection form preserving the differential $\delta$ of $L_X$. Using the Serre spectral sequence for the fibration \[B\aut_{\pt,0}(X)\to B\aut_\pt(X)\to B\pi_0(\aut_\pt(X)),\] the image of the natural map $H^\bullet(B\aut_\pt(X);\q)\to H^\bullet(B\aut_{\pt,0}(X);\q)$ is included in the invariant part. We give a chain map 
\[\Ccom(N,Z)_+\to \CE(\Der^+_\omega(L_X))^{\Sp(W,\delta)}.\]
using a positive truncated version $\C_+$ of $\C$. Considering $W$-labeled graphs, we can also obtain a $W$-labeled version $\C_+$ and a chain map 
\[\C_+\to \CE(\Der^+_\omega(L_X)).\]

\noindent
{\bf Acknowledgment.} I would like to thank Y. Terashima and H. Kajiura for many helpful comments. This work was supported by Grant-in-Aid for JSPS Research Fellow (No.17J01757).

\section{Preliminary}

In this paper, all vector spaces are over a field $K$ whose characteristic is zero. A field $K$ is regarded as a $\z$-graded vector space whose all elements have degree $0$.

For a finite set $U$, the number of elements in $U$ is denoted by $\# U$.

All tensor products of linear maps between $\z$-graded vector spaces contain their signs: for homogeneous linear maps $f:A\to V$, $g:B\to W$ between $\z$-graded vector spaces, we set
\[(f\otimes g)(a,b):=(-1)^{ga}f(a)\otimes g(b)\]
for $a\in A$ and $b\in B$. (We often denote by $|a|$ the degree of an element $a$. But we omit the symbol $|\cdot|$ of the degree when it appears in a power of $-1$. For example, $(-1)^{ga}$ means $(-1)^{|g||a|}$.)

Let $V$ be a $\z$-graded vector space. We denote $V^i$ the subspace of elements of $V$ of \textbf{cohomological degree} $i$ and $V_i=V^{-i}$ the subspace of elements of \textbf{homological degree} $i$. Remark that the \textbf{linear dual} $V^*=\Hom(V,\r)$ of $V$ is graded by $(V^*)^i=\Hom(V_i,\r)$.

The \textbf{$p$-fold suspension} $V[p]$ of $V$ for an integer $p$ is defined by 
\[V[p]^i:=V^{i+p}\]
and elements of $V[p]^i$ are presented by $x\sigma$ for $x\in V^{i+p}$ using the symbol $\sigma$ of cohomological degree $-p$. The $p$-suspension map $V\to V[p]$ is also denoted by $\sigma$. In this paper, the $N$-suspension $\sigma$ for an even number $N$ often appears. It is used for adjusting degrees of elements though we can ignore it when calculating signs. 

Let $V$ be a $\z$-graded vector space and $\alpha:V\otimes V\to K$ be a non-degenerate bilinear map of (cohomological) degree $n$. Out of the two conditions
\begin{enumerate}
\item $\alpha(x,y)=(-1)^{xy}\alpha(y,x)$ for homogeneous elements $x,y\in V$, and
\item $\alpha(x,y)=-(-1)^{xy}\alpha(y,x)$ for homogeneous elements $x,y\in V$,
\end{enumerate}
the pair $(V,\alpha)$ is called \textbf{symmetric vector space} with degree $n$ if satisfying (i), and \textbf{symplectic vector space} with degree $n$ if satisfying (ii).

\subsection{Algebras and  signs}

Let $V$ be a finite-dimensional $\z$-graded vector space. 
\begin{defi}
We define the following quotient algebras of the tensor algebra $TV$ generated by $V$.
\begin{itemize}
\item The \textbf{symmetric algebra} $SV$ generated by $V$ is the $\z$-graded commutative algebra which is the quotient algebra obtained from the $\z$-graded tensor algebra $TV$ by introducing the relation
\[xy=(-1)^{xy}yx\]
for $x,y\in V$. The image of $V^{\otimes k}$ for an integer $k$ in $SV$ is denoted by $S^kV$. 
\item The \textbf{exterior algebra} $\Lambda V$ generated by $V$ is the $\z$-graded anti-commutative algebra which is the quotient algebra obtained from the $\z$-graded tensor algebra $TV$ by introducing the relation
\[xy=-(-1)^{xy}yx\]
for $x,y\in V$. The image of $V^{\otimes k}$ for an integer $k$ in $\Lambda V$ is denoted by $\Lambda^kV$. 
\end{itemize}
\end{defi}

\begin{defi}\label{Kos}
For distinct elements $v_1,\dots,v_k\in V$ and a permutation $\pi\in\mathfrak{S}_k$, the sign $\epsilon$ defined by the equation on $S^kV$
\[v_1\cdots v_k=\epsilon\cdot v_{\pi(1)}\cdots v_{\pi(k)}\]
is called the \textbf{Koszul sign} of $(v_1,\dots,v_k)\mapsto (v_{\pi(1)},\dots,v_{\pi(k)})$. Similarly the sign $\bar{\epsilon}$ defined by the same equation in $\Lambda^kV$ is called the \textbf{anti-Koszul sign}. Note that the equation $\bar{\epsilon}=\sgn \pi \cdot \epsilon$.
\end{defi}

\subsection{Derivations}
Let $W$ be a finite-dimensional $\z$-graded vector space. 
\subsubsection{Completed tensor algebras}
We denote the \textbf{completed tensor algebra} by
\[\hat{T}W:=\prod_{r=0}^\infty W^{\otimes r}.\]
Its product $\mu$ and coproduct $\Delta$ are defined by
\[\mu(x_1\otimes \cdots\otimes x_s,x_{s+1}\otimes \cdots \otimes x_r)=x_1\otimes\cdots \otimes x_r,\]
\[ \Delta(x_1\otimes\cdots \otimes x_r)=\sum_{s=0}^r\sum_{\tau\in \Ush(s,r-s)}\epsilon \cdot(x_{\tau(1)}\otimes \cdots\otimes x_{\tau(s)})\otimes (x_{\tau(s+1)}\otimes \cdots\otimes x_{\tau(r)})\]
for homogeneous elements $x_1,\dots,x_r\in W$, where $\Ush(s,r-s)$ is the set of $(r,s-r)$-unshuffles and $\epsilon$ is the Koszul sign of the permutation $(x_1,\dots,x_r)\mapsto (x_{\tau(1)} ,\dots, x_{\tau(r)})$ (Definition \ref{Kos}). The primitive part of $\hat{T}W$ is the completed free Lie algebra $\hat{L}W$. These algebras have the gradings defined by the grading of $W$.

\subsubsection{Derivations on a completed tensor algebra}\label{der-tensor}
Let $\Der(\ct)$ be the Lie algebra of (continuous) derivations on the completed algebra $\hat{T}W$. Given a symplectic inner product $\omega$ of degree $N$ on $W$, we define the Lie algebra of \textbf{symplectic derivations} on $\hat{T}W$
\[\Der_\omega(\hat{T}W):=\{D\in \Der(\hat{T}W);\ D(\omega)=0\}.\]
Here $\omega$ is identified with the element of $\hat{L}W$ described by
\[\sum_{i<j}\omega_{ij}[x^i,x^j],\]
where $\{x^i\}$ is a basis of $W$ and the matrix $(\omega_{ij})_{i,j}$ is the inverse matrix of $(\omega(x^i,x^j))_{i,j}$.

Since derivations on $\hat{T}W$ are determined by the values on the generating space $W$, we get the isomorphism as graded vector space
\[\Phi_\omega:\Der(\hat{T}W)\simeq \Hom(W,\hat{T}W)\simeq \hat{T}W\otimes W[-N]=\prod_{r=1}W^{\otimes r}[-N],\]
where the second isomorphism is induced by the isomorphism $\Hom(W,\r)\simeq W[-N]$ derived from non-degeneracy of $\omega$. Furthermore, we also have the identification by $\Phi_\omega$
\begin{align*}\Der^r(\cct)&:=\{D\in \Der(\cct);D(W)\subset W^{\otimes (r+1)}\}\\
&\simeq \Hom(W,W^{\otimes (r+1)})\simeq W^{\otimes (r+2)}[-N].\end{align*}
Fixing a homogeneous basis $x^1,\dots,x^m$ of $W$, the derivations $x^{i_1}\cdots x^{i_k}\pt/\pt x^i$ ($1\leq i_1,\dots,i_k,i\leq m$), which are these elements corresponding to the linear map $x^i\mapsto x^{i_1}\cdots x^{i_k}$, consist a basis of $\Der^{k+1}(\cct)$. On the basis, $\Phi_\omega$ is described by
\[\Phi_\omega\left(x^{i_1}\cdots x^{i_k}\frac{\pt}{\pt x^i}\right)=\sum_{j}\omega_{ij} x^{i_1}\cdots x^{i_k}x^j\sigma^{-1},\]
where $\sigma^{-1}$ is a symbol of the $(-N)$-suspension which has homological degree $-N$. 

By the identification $\Phi_\omega$, the space of symplectic derivations is described by 
\[\Der_\omega(\hat{T}W)\overset{\Phi_\omega}{\simeq}  \prod_{r=1}^\infty(W^{\otimes r})^{\z/r\z}[-N]=\prod_{r=1}^\infty W_\cyc^{(r)}[-N].\]
Here $W_\cyc^{(r)}:=(W^{\otimes r})^{\z/r\z}$ is the space of invariant tensors by cyclic permutations of tensor factors, which is also defined in Definition \ref{cyclic}.

Therefore the Lie algebra $\Der_\omega(\ct)$ of symplectic derivations on $\ct$ is described by
\[\Der_\omega(\ct):=\Der(\ct)\cap \Der_\omega(\hat{T}W)\overset{\Phi_\omega}{\simeq}  \prod_{r=2}^\infty W(r)[-N],\]
\[\Der_\omega^{r+2}(\ct):=\Der^{r+2}(\ct)\cap \Der_\omega(\hat{T}W)\overset{\Phi_\omega}{\simeq}  W(r)[-N],\]
where $W(r):=(LW\otimes W)\cap W_\cyc^{(r)}$.

Through the isomorphism $\Phi_\omega$, the Lie algebra structure of $\Der(\cct)$ is described as follows:
\begin{lem}\label{derstr}
Let $[\ ,\ ]$ be the Lie bracket of $\Der(\cct)$. Then the linear map $[\ ,\ ]_\omega:=\sigma\Phi_\omega\circ [\ ,\ ]\circ (\Phi_\omega^{-1}\sigma^{-1})^{\otimes 2}$ is equal to 
\[\sum_{d_1+d_2=N}(\id\otimes \omega_{(d_1,d_2)})\left(\sum_{1\leq t<r_2}\pi_{1;t}^{r_1,r_2}+\sum_{1\leq s<r_1}\pi_{2;s}^{r_1,r_2}\right):W^{\otimes r_1}\otimes W^{\otimes r_2}\to W^{\otimes r_1+r_2-2}\]
where $\omega_{(d_1,d_2)}:W\otimes W\to\r$ for integers $d_1,d_2$ is the composition of the projection $W\otimes W\to W_{d_1}\otimes W_{d_2}$ and the restriction of $\omega$ to $W_{d_1}\otimes W_{d_2}$, and $\pi_{1;j}^{r_1,r_2},\pi_{2;i}^{r_1,r_2}:W^{\otimes r_1}\otimes W^{\otimes r_2}\to W^{\otimes r_1+r_2}$ for $1\leq i\leq r_1$, $1\leq j\leq r_2$ is defined by 
\[\pi_{1;j}^{r_1,r_2}(a_{1}^{(1)}\cdots a_{r_1}^{(1)}\otimes a_{1}^{(2)}\cdots a_{r_2}^{(2)})=\epsilon\cdot 
a_{1}^{(2)}\cdots a_{j-1}^{(2)}a_{1}^{(1)}\cdots a_{r_1-1}^{(1)}a_{j+1}^{(2)}
\cdots a_{r_2}^{(2)}a_{r_1}^{(1)}a_j^{(2)}\]
\[\pi_{2;i}^{r_1,r_2}(a_{1}^{(1)}\cdots a_{r_1}^{(1)}\otimes a_{1}^{(2)}\cdots a_{r_2}^{(2)})=\epsilon\cdot 
a_{1}^{(1)}\cdots a_{i-1}^{(1)}a_{1}^{(2)}\cdots a_{r_2-1}^{(2)}a_{j+1}^{(2)}
\cdots a_{r_2-1}^{(2)}a_{i}^{(1)}a_{r_2}^{(2)}\]
for homogeneous elements $a_1^{(1)},\dots,a_{r_1}^{(1)},a_1^{(2)},\dots,a_{r_2}^{(2)}$. Here $\epsilon$ is the Koszul sign of the corresponding permutations.
\end{lem}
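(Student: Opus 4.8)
The plan is to compute the two composites $D_1D_2$ and $D_2D_1$ directly on the generating space $W$ and then transport the answer back through $\Phi_\omega$. Write $D_1=\Phi_\omega^{-1}(u_1\sigma^{-1})$ and $D_2=\Phi_\omega^{-1}(u_2\sigma^{-1})$ with $u_1=a_1^{(1)}\cdots a_{r_1}^{(1)}$ and $u_2=a_1^{(2)}\cdots a_{r_2}^{(2)}$. First I would make $\Phi_\omega^{-1}$ explicit: reading the basis formula $\Phi_\omega(x^{i_1}\cdots x^{i_k}\partial/\partial x^i)=\sum_j\omega_{ij}x^{i_1}\cdots x^{i_k}x^j\sigma^{-1}$ backwards, and using that $(\omega_{ij})$ is the inverse of $(\omega(x^i,x^j))$, shows that $\Phi_\omega^{-1}(a_1\cdots a_r\sigma^{-1})$ is the derivation sending a generator $w\in W$ to $\pm\,\omega(w,a_r)\,a_1\cdots a_{r-1}$, i.e.\ contraction of $w$ against the trailing factor followed by deletion of that factor, the sign absorbing the $(-N)$-suspension and the convention $(f\otimes g)(a,b)=(-1)^{ga}f(a)\otimes g(b)$. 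Since a derivation is determined by its restriction to $W$, it suffices to track $[D_1,D_2](w)$ for $w\in W$ and then re-apply $\Phi_\omega$.

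Second, I would evaluate $D_1D_2(w)$ by the Leibniz rule. Applying $D_2$ first contracts $w$ against $a_{r_2}^{(2)}$ and leaves $a_1^{(2)}\cdots a_{r_2-1}^{(2)}$; then $D_1$, being a derivation, acts on this product and replaces one factor $a_t^{(2)}$ ($1\le t<r_2$) by $D_1(a_t^{(2)})=\pm\,\omega(a_t^{(2)},a_{r_1}^{(1)})\,a_1^{(1)}\cdots a_{r_1-1}^{(1)}$. The result is a sum over $t$ of terms in which $a_t^{(2)}$ is contracted against the trailing factor $a_{r_1}^{(1)}$ of $u_1$, while the original trailing factor $a_{r_2}^{(2)}$ of $u_2$ survives as the factor paired with $w$. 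Transporting back through $\Phi_\omega$ moves that surviving factor to the trailing slot, and comparison with the definition of $\pi_{1;t}^{r_1,r_2}$ shows that $\Phi_\omega(D_1D_2)$ equals exactly $\sum_{d_1+d_2=N}(\id\otimes\omega_{(d_1,d_2)})\sum_{1\le t<r_2}\pi_{1;t}^{r_1,r_2}$ applied to $u_1\otimes u_2$. The decomposition $\sum_{d_1+d_2=N}$ appears because $\omega$ is nonzero only on complementary-degree pairs summing to $N$, and fixing these degrees is precisely what makes the reshuffling Koszul signs $\epsilon$ in $\pi_{1;t}$ well-defined.

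Third, the symmetric computation of $D_2D_1(w)$ produces, by the identical argument with the roles of $u_1$ and $u_2$ interchanged, the family $\pi_{2;s}^{r_1,r_2}$ for $1\le s<r_1$. Forming the graded commutator $[D_1,D_2]=D_1D_2-(-1)^{|D_1||D_2|}D_2D_1$ and conjugating by the suspensions, i.e.\ applying $\sigma\Phi_\omega\circ(-)\circ(\Phi_\omega^{-1}\sigma^{-1})^{\otimes2}$, then assembles the two families into the single sum displayed in the statement.

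The hard part will be the sign bookkeeping, not the combinatorics of the reshuffles, which are forced once one sees which factors get contracted. Three sources of signs must be reconciled: the Koszul signs produced by commuting the homogeneous factors into the orders prescribed by $\pi_{1;t}$ and $\pi_{2;s}$; the even-degree suspensions $\sigma^{\pm1}$, which do not affect the reshuffle signs but do enter the identification of $\Phi_\omega^{-1}$; and the commutator sign $-(-1)^{|D_1||D_2|}$ together with the degree shift caused by the $\id\otimes\omega_{(d_1,d_2)}$ convention. The claim is that all of these conspire so that both families enter the final formula with a plus sign, and verifying this requires carrying the degrees $d_1,d_2$ of the contracted elements through every transposition — which is exactly why the statement is organized around the degree-graded contractions $\omega_{(d_1,d_2)}$.
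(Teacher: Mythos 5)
Your proposal is correct and takes essentially the same route as the paper: the paper also computes the bracket on generators via the Leibniz rule and transports the result through $\Phi_\omega$, with the $D_1D_2$-type term yielding the $\pi_{1;t}^{r_1,r_2}$ insertions and the $D_2D_1$-type term the $\pi_{2;s}^{r_1,r_2}$ insertions. The only cosmetic difference is that the paper works in a fixed homogeneous basis with Kronecker deltas and the pairing entries $\omega^{ij}$, where you use basis-free contraction against the trailing tensor factor; like yours, the paper's proof leaves the Koszul-sign bookkeeping largely implicit, noting only that evenness of $N$ is used.
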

\begin{proof}
Let $x^1,\dots,x^m$ be a homogeneous basis of $W$. The Lie bracket for the basis is described by
\[\left[x^{i_1}\cdots x^{i_k}\frac{\pt}{\pt x^i},x^{j_1}\cdots x^{j_l}\frac{\pt}{\pt x^j}\right]\]\[=\sum_t\epsilon\delta_i^{j_t} x^{j_1}\cdots x^{j_{t-1}}x^{i_1}\cdots x^{i_k}x^{j_{t+1}}\cdots x^{j_l}\frac{\pt}{\pt x^j}-\sum_s\epsilon'\delta_j^{i_s}x^{i_1}\cdots x^{i_{s-1}}x^{j_1}\cdots x^{j_l}x^{i_{s+1}}\cdots x^{i_k}\frac{\pt}{\pt x^i}\]
where $\epsilon=(-1)^{(x^{i_1}+\cdots +x^{i_k}-x^i)(x^{j_{1}}+\cdots +x^{j_{t-1}})}$, $\epsilon'=(-1)^{(x^{j_1}+\cdots +x^{j_l}-x^j)(x^{i_{s+1}}+\cdots +x^{i_k}-x^i)}$, and $\delta^i_j$ is the Kronecker's delta.

Then, for $A=x^{i_1}\cdots x^{i_{r_1}}$ and $B=x^{j_1}\cdots x^{j_{r_2}}$, we obtain 
\begin{align*}
[ A, B]_\omega=&\sum_t\epsilon x^{j_1}\cdots x^{i_1}\cdots x^{i_{r_1-1}}\cdots x^{j_{r_2}}\omega^{i_{r_1}j_t}\\
&+\sum_s\epsilon' x^{i_1}\cdots x^{j_1}\cdots x^{j_{r_2-1}}\cdots x^{i_{r_1}}\omega^{i_sj_{r_2}}\\
=&\sum_{d_1+d_2=N}(\id\otimes\omega_{(d_1,d_2)})\left(\sum_{1\leq t<r_2}\pi_{1;t}^{r_1,r_2}+\sum_{1\leq s<r_1}\pi_{2;s}^{r_1,r_2}\right)(A\otimes B),
\end{align*}
where $\epsilon$ and $\epsilon'$ are the Koszul sign of 
\begin{align*}
(x^{i_1},\dots, x^{i_{r_1}},x^{j_1},\dots, x^{j_{r_2}})\mapsto (x^{j_1},\dots, x^{i_1},\dots, x^{i_{r_1-1}},\dots, x^{j_{r_2}},x^{i_{r_1}},x^{j_t}),\\ 
(x^{i_1},\dots ,x^{i_{r_1}},x^{j_1},\dots, x^{j_{r_2}})\mapsto (x^{i_1},\dots, x^{j_1},\dots, x^{j_{r_2-1}},\dots, x^{i_{r_1}},x^{i_s},x^{j_{r_2}})\end{align*}
respectively. In the calculus above, note that we use the assumption that $N$ is even.
\end{proof}

The lemma above is needed to prove Theorem \ref{graphmap}.

\subsubsection{Derivations on a dgl}
Let $\delta$ be an element in $\Der_\omega(\ct)$ of homological degree $-1$ such that $\delta^2=0$. Then $\ad(\delta)$ is a differential operator on $\Der_\omega(\ct)$. 

In the case that $W_0$ is positively graded, i.e., $W_i=0$ for $i\leq 0$, we can regard $\delta\in \Der_\omega(LW)$ since $\delta$ is described by only finite sums. Then we often consider the positive truncation $(\Der_\omega^+(LW),\ad(\delta))$ of the chain complex $(\Der_\omega(LW),\ad(\delta))$ defined by
\[\Der^+_\omega(LW)_i:=\begin{cases}\Der_\omega(LW)_i&(i>2)\\ \Ker(\ad(\delta))_1&(i=1)\\0&(\text{otherwise}).\end{cases}\]

\begin{defi}[Chevalley-Eilenberg complex]
Let $(L,\delta)$ be a dgl. We define the Chevalley-Eilenberg complex as follows:
\[C^{p,q}_{CE}(L):=(\Lambda^pL^*)^q,\]
where $\Lambda^\bullet L^*$ is the exterior algebra generated by the graded vector space $L^*$. The first differential $d_{CE}$ is defined by the formula for $c\in C^{p,q}_{CE}(L)$ and $D_1,\dots,D_{p+1}\in L$, 
\[(d_{CE}c)(D_1,\dots,D_{p+1})=\sum_{i<j}\bar{\epsilon}\cdot c([D_i,D_j],D_1,\dots,\hat{D}_i,\dots,\hat{D}_j,\dots,D_{p+1}),\]
where $\bar{\epsilon}=(-1)^{D_i(D_1+\cdots+D_{i-1})+D_j(D_1+\cdots +D_{j-1})+D_iD_j+i+j-1}$, and the second differential $L_\delta$ derived from $\delta$ is defined by
\[L_\delta=i_\delta d_{CE}-d_{CE}i_\delta,\]
using the \textbf{interior product} defined by
\[(i_\delta c)(D_1,\dots,D_{p})=c(\delta,D_1,\dots,D_{p}),\]
for $c\in C^{p+1,q}_{CE}(L)$ and $D_1,\dots,D_{p}\in L$. Then the triple $(\CE(L),d_{CE},L_\delta)$ is a double complex.
\end{defi}

In this paper, we consider the Chevalley-Eilenberg complexes of dgls $(\Der_\omega(\ct),\ad(\delta))$ and $(\Der^+_\omega(LW),\ad(\delta))$, and the invariant space $\CE(\Der^+_\omega(LW))^{\Sp(W,\delta)}$, where $\Sp(W,\delta)$ is the group of symplectic linear isomorphisms $W\to W$ preserving $\delta$. 

\subsection{dgl model with symplectic form of manifolds}

In this subsection, we review a Chen's dgl model of a manifold. Let $X$ be a smooth manifold. Put $A=A^\bullet(X)$ and $H=H_{DR}^\bullet(X)$. Fix a homotopy transfer diagram 
\[\xymatrix{\ar@(ld,lu)[] A\ar@<0.5ex>[r]&H,\ar@<0.5ex>[l]}\]
e.g. in the case that $X$ is a closed manifold, it is obtained by using the Hodge decomposition of the de Rham complex $A$. Since $A$ is a commutative dga with symmetric form (intersection form), $H$ has the structure of minimal cyclic $C_\infty$-algebra by the diagram (details in \cite{KoSo, Me, Kad, MSS, HLNC} for instance). 

Let $I$ be the intersection form on $H$, $m$ the cyclic $C_\infty$-algebra structure on $H$ obtained by the homotopy transfer diagram and $s:H\to H[1]$ be the suspension map. We denote $V=H[1]^*$. Defining the suspension of $m_i$ by $\bar{m}_i:=s\circ m_i\circ (s^{-1})^{\otimes i}$ for all $i\geq 1$ and of $I$ by $\omega:=I\circ (s^{-1})^{\otimes2}$, then the duals of these define the symplectic inner product $\omega$ on $H[1]^*$ of degree $N=n-2$ and the linear map $\bar{\delta}_i:V\to V^{\otimes n}$ of homological degree $-1$ . Thus extending the unique derivation $\bar{\delta}_i:\hat{L}V\to \hat{L}V$ by the Leibniz rule, then we have the derivation of homological degree $-1$
\[\bar{\delta}:=\sum_{i=1}^\infty\bar{\delta}_i\in \Der_\omega(\hat{L}V).\]
Furthermore we can prove that $\bar{\delta}$ is a differential since $m$ satisfies the $A_\infty$-relations and \textbf{quadratic}, i.e. $\bar{\delta}(V)\subset \prod_{i\geq2}V^{\otimes i}$, since $(H,I,m)$ is minimal. 

The Chen's dgl model is a reduced version of the construction. Suppose $X$ is connected and put
\[W:=H[1]^*_{\geq 0}=H_+(X;\r)[-1].\]
Then we have the restriction $\delta:\hat{L}W\to \hat{L}W$ of $\bar{\delta}$ and $\omega:W^{\otimes2}\to\r$. If $X$ is simply-connected, we can restrict the differential $\delta$ on the free Lie algebra $LW\subset \hat{L}W$ since $\delta(w)$ for $w\in W$ has only finitely many nontrivial terms.

\begin{thm}[Chen\cite{Chen}]
For a simply-connected closed manifold $X$ with base point $*$, the dgl $(LW,\delta)$ is a Quillen model of $X$, i.e., there is a Lie algebra isomorphism
\[H_\bullet(LW,\delta)\simeq \pi_\bullet(\Omega X)\otimes\q.\]

\end{thm}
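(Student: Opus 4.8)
The plan is to deduce the statement from Quillen's theorem by recognizing $(LW,\delta)$ as a Quillen model of $X$ in the usual sense of rational homotopy theory. Recall that for a simply-connected space $X$ Quillen constructs a dgl $\lambda(X)$ whose homology, with the bracket induced by the Samelson product, is isomorphic to $\pi_\bullet(\Omega X)\otimes\q$, and that any dgl joined to $\lambda(X)$ by a zig-zag of dgl quasi-isomorphisms has the same property. So it suffices to exhibit such a zig-zag between $(LW,\delta)$ and $\lambda(X)$. (The identification of $H_\bullet(\lambda(X))$ with $\pi_\bullet(\Omega X)\otimes\q$ is, at bottom, the classical pairing of Chen's iterated integrals with the bar construction of $A^\bullet(X)$, which I would take as the underlying input.)

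First I would invoke the homotopy transfer theorem applied to the chosen transfer diagram: it makes $(H,m)$ a minimal $C_\infty$-algebra that is $C_\infty$-quasi-isomorphic to the de Rham cdga $(A,\wedge,d)$. Since $A$ is a commutative dga model of $X$ and $X$ is simply-connected, $(H,m)$ is a minimal commutative (strong homotopy) model of $X$, so no rational homotopy information is lost in passing from $A$ to $(H,m)$.

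Second, I would use Koszul duality between the commutative and Lie operads. A $C_\infty$-structure $m=\sum_i m_i$ on $H$ is equivalent to a degree $-1$ square-zero coderivation on the cofree conilpotent Lie coalgebra generated by the (reduced) suspension $s\tilde H$, and dually to a differential on the free Lie algebra generated by the linear dual of that generating space, namely $W=H_+(X;\r)[-1]=H[1]^*_{\geq 0}$. The dualized operations $\bar m_i$ are exactly the maps $\bar\delta_i$ of the construction, so the Koszul-dual dgl is precisely $(LW,\delta)$; by the rigidity of Koszul duality this dgl is quasi-isomorphic to the free dgl functorially attached to $A$ itself, i.e. to $\lambda(X)$. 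Combined with the first step and Quillen's theorem, this yields the asserted isomorphism $H_\bullet(LW,\delta)\simeq\pi_\bullet(\Omega X)\otimes\q$, and the Lie bracket induced on homology matches the Samelson bracket because the bracket on $LW$ is the free one dual to the comultiplication.

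The main obstacle is the sign-and-suspension bookkeeping that identifies the dualized $C_\infty$-operations with $\delta$: one must check that the non-degenerate form $\omega=I\circ(s^{-1})^{\otimes2}$ of degree $N=n-2$ supplies exactly the identification $\Hom(W,\r)\simeq W[-N]$ used to turn each $m_i\colon H^{\otimes i}\to H$ into the co-operation $\bar\delta_i$, and that all shifts $s$, $\sigma$ and Koszul signs are compatible with the conventions of Lemma \ref{derstr} and with the standard cobar differential. A secondary point is to confirm that restricting to the positively graded, reduced piece $W=H[1]^*_{\geq0}$ and to $LW\subset\hat LW$ changes nothing up to quasi-isomorphism; this follows because minimality of $(H,I,m)$ makes $\delta$ quadratic, so it is already a genuine differential on the non-completed free Lie algebra, while simple-connectivity forces $W$ into positive degrees, so that the completion and the truncation are both harmless.
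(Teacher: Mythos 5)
The paper does not actually prove this statement: it is quoted as Chen's theorem and delegated entirely to \cite{Chen}, where it is established via Chen's theory of formal power series connections and iterated integrals. Your argument is therefore necessarily a different route, and as a strategy it is the standard modern one: Quillen's theorem supplies a dgl $\lambda(X)$ with $H_\bullet(\lambda(X))\simeq\pi_\bullet(\Omega X)\otimes\q$ as Lie algebras (Samelson bracket), homotopy transfer replaces the de Rham algebra $A$ by the minimal $C_\infty$-model $(H,m)$, and commutative--Lie Koszul duality converts $(H,m)$ into the quadratic-and-higher differential $\delta$ on the free Lie algebra $LW$, $W=H[1]^*_{\geq0}$, which is exactly the paper's construction. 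What your route buys is generality and conceptual packaging (it works for any simply connected finite-type space with a commutative model, not just closed manifolds); what Chen's route buys is an explicit integration map realizing the isomorphism. Your handling of the completion and truncation issues (minimality gives quadraticity, simple connectivity puts $W$ in positive degrees, so $\delta$ restricts to the uncompleted $LW$) is correct and matches the paper's own remark preceding the theorem.

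The one step you cannot dismiss as ``rigidity of Koszul duality'' is the assertion that the Koszul-dual dgl of a commutative model of $X$ is quasi-isomorphic to $\lambda(X)$: Quillen's functor is not ``the free dgl functorially attached to $A$,'' and the compatibility of the Lie-side and commutative-side approaches to rational homotopy theory is a genuine theorem (Majewski; see also Tanr\'e \cite{Tanre}, cited in this paper, where precisely the Chen, Quillen and Sullivan models are compared). Invoking that theorem, together with the fact that a $C_\infty$-quasi-isomorphism induces a quasi-isomorphism of the dual dgls, closes your zig-zag. Two minor inaccuracies: Quillen's identification of $H_\bullet(\lambda(X))$ with homotopy does not rest on Chen's iterated integrals (your parenthetical inverts the history), and since $H$ here is real de Rham cohomology the conclusion obtained this way is literally $\pi_\bullet(\Omega X)\otimes\r$, so the $\q$-form of the statement needs a descent or rational-model remark --- an imprecision already present in the paper itself.
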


\section{Graph complex}

\subsection{Orientation and ordering of graded sets}

\def\Ord{\mathrm{Ord}}
\def\Or{\mathrm{Or}}
\def\Cyc{\mathrm{Cyc}}
The set of \textbf{orderings} on a set $U$ is defined by
\[\Ord(U):=\{(u_1,\dots,u_k)\in U^{\times k};U=\{u_1,\dots,u_k\}\},\]
where $k:=\#U$. 

\begin{defi}
Let $U$ be a $\z$-graded set, i.e. a finite set $U$ given a map $|\cdot|:U\to\z$.
\begin{itemize}
\item The graded vector space generated by $U$ is denoted by $\r U$.
\item The symmetric algebra generated by $U$ is denoted by $SU:=S(\r U)$.
\item The exterior algebra generated by $U$ is denoted by $\Lambda U:=\Lambda(\r U)$.
\end{itemize}
For an element $(u_1,\dots,u_k)\in \Ord(U)$, we denote the image of $u_1\otimes \cdots \otimes u_k$ in $\Lambda U$ by $[u_1,\dots,u_k]$. The 1-dimensional vector space generated by this element is written by\[ O(U):=\braket{[u_1,\dots,u_k]}\subset \Lambda U.\] 
\end{defi}

\begin{defi}\label{cyclic}
Let $V$ be a $\z$-graded vector space. We define the subspace $V_\cyc^{(k)}$ of \textbf{cyclic tensors} in $V^{\otimes k}$ by the image of the map $[-,\dots,-]_\cyc:V^{\otimes k}\to V^{\otimes k}$ obtained by
\[x_1\otimes \cdots\otimes x_k\mapsto \sum_{\tau\in \z/k\z}\epsilon\cdot x_{\tau(1)}\otimes\cdots \otimes x_{\tau(k)},\]
where $\z/k\z$ is identified with the group of cyclic permutations and $\epsilon$ is the Koszul sign of $(x_1,\dots,x_k)\mapsto (x_{\tau(1)},\dots,x_{\tau(k)})$. 
For a $\z$-graded set $U$, we denote
\[\Cyc(U) :=\braket{[u_1,\dots, u_k]_\cyc;(u_1,\dots,u_k)\in \Ord(U)}\subset (\r U)_\cyc^{(k)}.\]
\end{defi}

\subsection{Definition of graph complex}\label{defgraph}
Let $W$ be a finite-dimensional symplectic vector space with form $\omega$ of degree $N$ and suppose that $N$ is even and $Z:=\{a\in \z;W_a\neq 0\}\subset \{0,\dots,N\}$. Our labeled graph complex depends on $(W,\omega)$.

\subsubsection{Definition of graphs}
\begin{defi}
An \textbf{$N$-graded graph} $\Gamma$ consists of the following information:
\begin{itemize}
\item The set $H(\Gamma)$ of \textbf{half-edges}.
\item The set $V(\Gamma)$ of \textbf{vertices}. It is a partition of the set $H(\Gamma)$, i.e.
\[H(\Gamma)=\coprod_{v\in V(\Gamma)}v,\quad v\neq \emptyset\ (v\in V(\Gamma)).\]
The number $\# v$ of elements of any $v\in V(\Gamma)$ is called the \textbf{valency} of $v$. A vertex with valency $>1$ is called an \textbf{internal vertex} and one with valency $1$ is called an \textbf{external vertex}. The set of internal (resp. external) vertices is denoted by $V_i(\Gamma)$ (resp. $V_e(\Gamma)$).
\item The set $E(\Gamma)$ of \textbf{edges}. It is a partition of the set $H(\Gamma)$ such that the number of elements of any $e\in E(\Gamma)$ is two, i.e.
\[H(\Gamma)=\coprod_{e\in E(\Gamma)}e,\quad \# e=2\ (e\in E(\Gamma)).\]
\item The cohomological \textbf{degree of half-edges}. It is a map $|\cdot|:H(\Gamma)\to Z$ such that $|h_1|+|h_2|=N$ for an edge $e=\{h_1,h_2\}\in E(\Gamma)$. Then the cohomological degrees of vertices and edges are defined by
\[|v|:=|h_1|+\cdots+|h_r|-N,\quad |e|:=N\]
for $v=\{h_1,\dots,h_r\}\in V(\Gamma)$ and $e\in E(\Gamma)$.
\item The division of the set $V_i(\Gamma)$ of internal vertices to two disjoint sets \[V_i(\Gamma)=V_n(\Gamma)\amalg V_s(\Gamma)\]
such that all elements in $V_s(\Gamma)$ have cohomological degree $-1$ and the valency $\geq 3$. An element of $V_n(\Gamma)$ is called \textbf{normal vertex}, and one of $V_s(\Gamma)$ is called \textbf{special vertex}.
\end{itemize}
The set of isomorphism classes of such graphs is denoted by $\G(N)$. Here an isomorphism between $N$-graded graphs is a bijection between the sets of half-edges preserving all information of $N$-graded graphs.
\end{defi}

\begin{ex}
In the case of $N=4$ and $Z=\{0,1,2,3,4\}$, we can give examples of $4$-graded graphs in Figure \ref{ex}.  In these figures, 
\begin{itemize}
\item a black vertex $\bullet$ means a normal vertex, a white vertex $\circ$ a special vertex and a square vertex $\blacksquare$ a univalent vertex, and
\item a number drawn beside a half-edge is its degrees.
\end{itemize}
\begin{figure}[h]
\centering
\includegraphics[width=8cm]{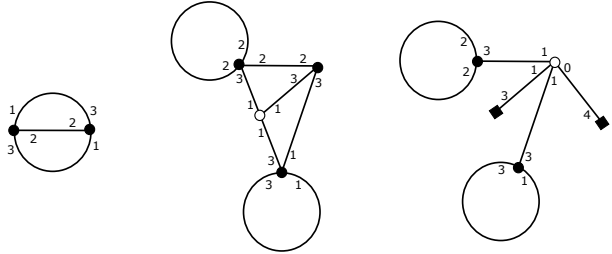}
\caption{Examples of $4$-graded graphs}\label{ex}
\end{figure}%
\end{ex}

\subsubsection{Decoration on vertices}
We shall give the relation equivalent to the dual of vertices defined by the cyclic Lie operad as in \cite{Con, Ham, Markl}.
\begin{defi}
Let $\Gamma$ be an $N$-graded graph. 
\begin{itemize}
\item We introduce to $\Cyc(v)[N]$ for $v\in V_i(\Gamma)$ the \textbf{commutativity relation} 
\[S_{v,h_r;s}(o):=\sum_{\tau\in \Sh(s,r-s-1)}o^{\tau^{(v,h_r)}}=0,\]\[ o^{\tau^{(v,h_r)}}:=\epsilon[h_{\tau(1)},\dots,h_{\tau(r-1)},h_r]_\cyc\sigma,\]
for $r-1>s>0$ and $o=[h_1,\dots,h_r]_\cyc\sigma\in\Cyc(v)[N]$, where $\Sh(p,q)$ is the set of $(p,q)$-shuffles, $\sigma$ is the symbol of the $N$-fold suspension, and $\epsilon$ is the Koszul sign. Then we denote the obtained space by $C(v)=\Cyc(v)[N]/(\text{com. rel.})$. (In the case of $r=3$, it is the AS-relation for Jacobi diagrams.) 
\end{itemize}

\end{defi}
\begin{figure}[h]
\centering
\includegraphics[width=9cm]{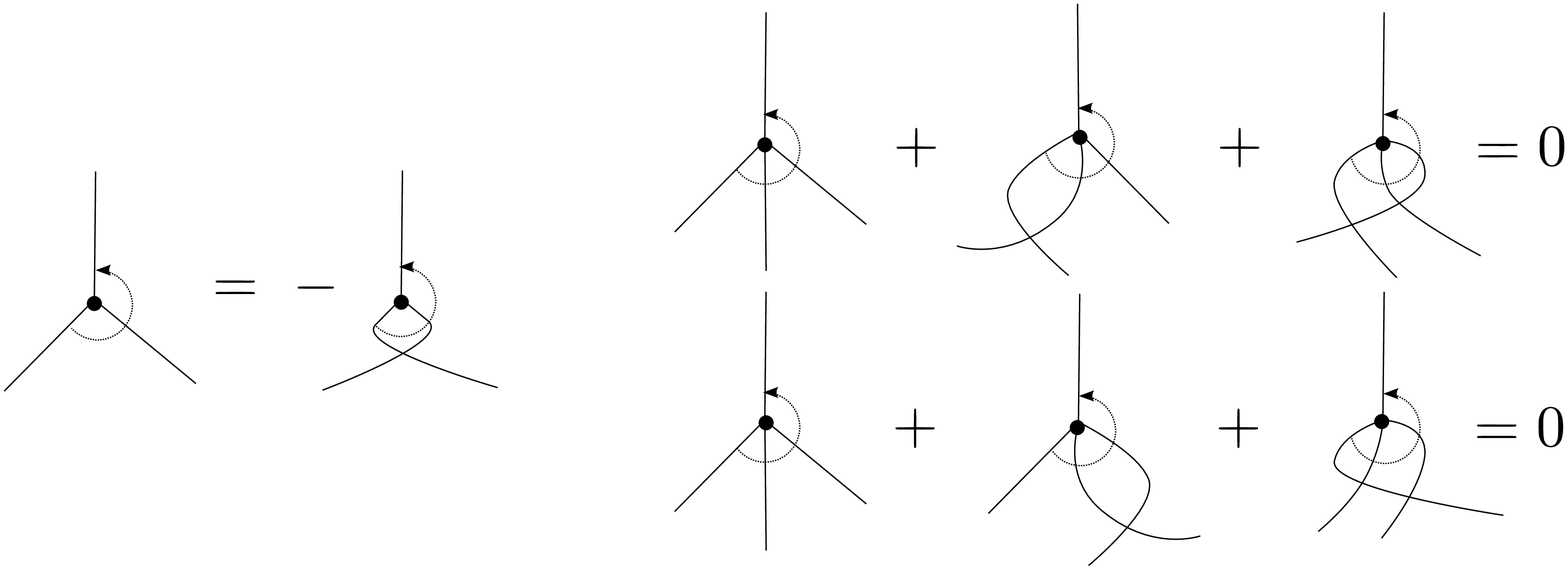}
\caption{Commutativity ($r=3,4$). (Koszul signs are omitted in figures.)}
\end{figure}%

\subsubsection{Decoration on $N$-graded graphs}\label{orientation} Set
\[\tilde{O}_\com(W,\Gamma):=\bigodot_{e\in E(\Gamma)}O(e)\otimes\bigodot_{u\in V_e(\Gamma)}W[-N]_{|u|}\otimes\bigwedge_{v^s\in V_s(\Gamma)}C(v^s) \otimes \bigwedge_{v\in V_n(\Gamma)}C(v), \]
where 
\[\bigodot_{u\in U}V(u):=\left\{v_{u_1} \cdots v_{u_k}\in S^k \left(\bigoplus_{u\in U}V(u)\right);v_{u_i}\in V({u_i}),\ (u_1,\dots,u_k)\in \Ord(U)\right\},\]
\[\bigwedge_{u\in U}V(u):=\left\{v_{u_1} \cdots v_{u_k}\in \Lambda^k \left(\bigoplus_{u\in U}V(u)\right);v_{u_i}\in V({u_i}),\ (u_1,\dots,u_k)\in \Ord(U)\right\}\]
for a family $(V(u))_{u\in U}$ of $\z$-graded vector spaces indexed by a finite set $U$. This tensor product consists of four factors: the first factor means directions of edges of $\Gamma$, the second factor $W$-labels of  external vertices of $\Gamma$, the third factor (equivalence classes of) cyclic orderings on special vertices of $\Gamma$, and the fourth factor the same on normal vertices of $\Gamma$. Note that $W[-N]_{|u|}=W_{|h|}[-N]$ for an external vertex $u=\{h\}$.

We need to identify elements of $\tilde{O}_\com(W,\Gamma)$ by the symmetry of $\Gamma$. An automorphism $\alpha$ of an $N$-graded graph $\Gamma\in \G(N)$ induces the linear isomorphism $C(v)\to C(\alpha(v))$ for $v\in V_i(\Gamma)$ described by
\[[h_1,\dots,h_k]_\cyc\mapsto [\alpha(h_1),\dots,\alpha(h_k)]_\cyc,\]
and the identity map $W[-N]_{|u|}\to W[-N]_{|\alpha(u)|}=W[-N]_{|u|}$ for $u\in V_e(\Gamma)$. Therefore the automorphism group of $\Gamma$ acts on the vector space $\tilde{O}_\com(W,\Gamma)$ by the induced permutation of half-edges. Then the coinvariant vector space of $\tilde{O}_\com(W,\Gamma)$ by this action is denoted by $O_\com(W,\Gamma)$. 
We often consider an element $o$ of $O_\com(W,\Gamma)$ described by the form
\[o=[o_1,\dots,o_l;w_1,\dots,w_{k_e};c_1^s,\dots,c_{k_s}^s;c_1,\dots,c_{k_n}].\]
\[:=(o_1\cdots o_l)\otimes(w_1\cdots w_{k_e})\otimes(c_1^s\cdots c_{k_s}^s)\otimes (c_1\cdots c_{k_n})\]
where $w_i\in W[-N]_{|u_i|}$ and \[o_i=[\hat{o}_i],\quad c_i^s=[\hat{c}_i^s]_\cyc\sigma,\quad c_i=[\hat{c}_i]_\cyc\sigma,\]
for $\hat{o}_i\in \Ord(e_i)$, $\hat{c}_i\in \Ord(v_i)$ and $\hat{c}_i^s\in\Ord(v_i^s)$. Such element $o$ is called an \textbf{orientation} of $\Gamma$, a pair $(\Gamma,o)$ is an \textbf{oriented graph}, and the information 
\[\hat{o}=(\hat{o}_1,\dots,\hat{o}_l; w_1,\dots,w_{k_e};\hat{c}_1^s,\dots,\hat{c}_{k_s}^s;\hat{c}_1,\dots,\hat{c}_{k_n})\]
is called a \textbf{lift} of an orientation $o=[\hat{o}]$ on $\Gamma$. The vector space $O_\com(W,\Gamma)$ is generated by orientations. 

\begin{ex}
In the case of $N=4$ and $Z=\{0,1,2,3,4\}$, we can give examples of decorated $4$-graded graphs in Figure \ref{ex1} and \ref{ex2}.  In these figures, 
\begin{itemize}
\item an arrow on an edge means a direction, and
\item an arc drawn around a vertex is an ordering of half-edges incident to this vertex.
\end{itemize}
\begin{figure}[h]
\centering
\includegraphics[width=10cm]{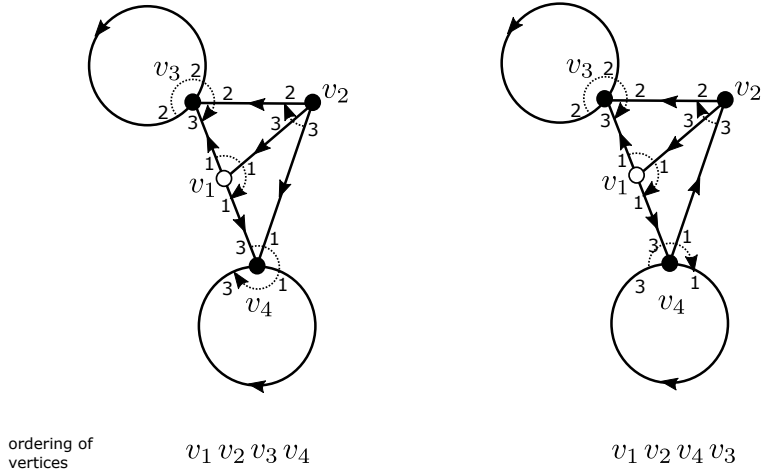}
\caption{Non-labeled examples: the left $(\Gamma,o_1)$ and the right $(\Gamma,o_2)$}\label{ex1}
\end{figure}%
\begin{figure}[h]
\centering
\includegraphics[width=6cm]{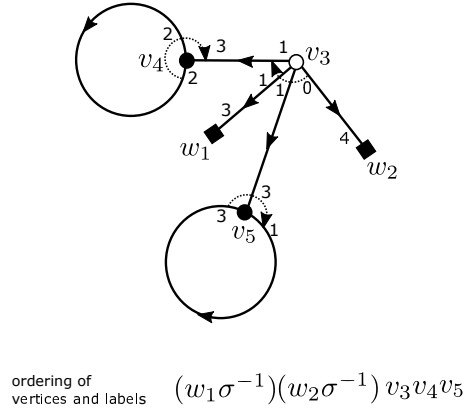}
\caption{A labeled example}\label{ex2}
\end{figure}%

In Figure \ref{ex1}, the degrees of vertices are $v_1=-1$, $v_2=4$, $v_3=5$, and $v_4=4$. In the space $O(\Gamma)$, we have\[o_1=(-1)^{5\cdot 4+1}(-1)^{3\cdot 1+1}(-1)^{3\cdot (3+1+1)}o_2=o_2,\]
where the signs $(-1)^{5\cdot 4+1},(-1)^{3\cdot 1+1},(-1)^{3\cdot (3+1+1)}$ are coming from changes of the ordering of vertices, the direction of the edge between $v_2$ and $v_4$ and the ordering of half-edges incident to $v_4$ respectively.

In Figure \ref{ex2}, elements $w_1\in W_3$ and $w_2\in W_4$ are labels of univalent vertices $v_1,v_2$ (their names $v_1,v_2$ of vertices are omitted in the figure). Note their degrees $|v_1|=|w_1\sigma^{-1}|=-1,|v_2|=|w_2\sigma^{-1}|=0$.
\end{ex}

\subsubsection{Definition of the bigraded vector space $\tC$} The cohomological bidegree $(p,q)\in \z\times\z$ of $\Gamma\in\G(N)$ is defined by
\[p=\# V_n(\Gamma),\quad q=\sum_{v\in V_n(\Gamma)}|v|=\#V_s(\Gamma)+N(\#E(\Gamma)-\#V(\Gamma))-\sum_{u\in V_e(\Gamma)}|u|,\]
and bidegree of elements in $O_\com(W,\Gamma)$ is defined by that of $\Gamma$. We define \textbf{the space of $N$-graded ribbon graphs} by
\[\tC:=\bigoplus_{\Gamma\in \G(N)}O_\com(W,\Gamma),\quad \hat{C}^{p,q}_\com (W):=\bigoplus_{\Gamma\in \G^{p,q}(W)}O_\com(W,\Gamma),\]
where $\G^{p,q}(W)$ is the subset of $\G(N)$ consisting $N$-graded graphs of degree $(p,q)$. Then $\tC$ can be regarded as bigraded vector space. 
We often denote an element in $\tC$ corresponding to $o\in O_\com(W,\Gamma)$ for $\Gamma\in\G(N)$ by $(\Gamma,o)$.

\subsubsection{Definition of the first differential $d$} We define the linear map $d_{v;h^1,h^2}^{a,b}:O_\com(W,\Gamma)\to \tC$ for an $N$-graded graph $\Gamma\in \G(N)$, a normal vertex $v\in V_n(\Gamma)$, two distinct half-edges $h^1,h^2$ incident to $v$, $a,b\in Z$ satisfying $a+b=N$. For an order $h_1,\dots,h_r$ of half-edges incident to $v$ such that $h^1=h_r$ and $h^2=h_i$, put
\[d_{v;h^1,h^2}^{a,b}(\Gamma,[-;-;-;[h_1,\dots,h_r]\sigma,-])\]\[=(\Gamma_{v;h^1,h^2}^{a,b},[-,[h',h''];-;-;[h_1,\dots,h_i,h']\sigma,[h'',h_{i+1},\dots,h_r]\sigma,-]).\]
Here $\sigma$ is the $N$-fold suspension, and the $N$-graded graph $\Gamma_{v;h^1,h^2}^{a,b}$ is defined by
\[H(\Gamma_{v;h^1,h^2}^{a,b})=H(\Gamma)\amalg\{h',h''\},\quad V(\Gamma_{v;h^1,h^2}^{a,b})=\left(V(\Gamma)\setminus\{v\}\right)\amalg\{v',v''\},\] \[V_s(\Gamma_{v;h^1,h^2}^{a,b})=V_s(\Gamma),\quad E(\Gamma_{v;h^1,h^2}^{a,b})=E(\Gamma)\amalg\{e_0\},\]
where $v'=\{h_1,\dots,h_i,h'\}$, $v''=\{h'',h_{i+1},\dots,h_r\}$, $e_0=\{h',h''\}$, $|h'|=a$ and $|h''|=b$. Note that the equation above is enough to define the operator $d^{a,b}_{v;h^1,h^2}$ and the operator is well-defined. 
\begin{figure}[h]
\centering
\includegraphics[width=8cm]{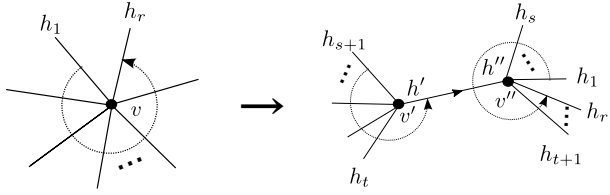}
\caption{The operator $d_{v;h_s,h_t}^{a,b}$.}
\end{figure}%

Then we obtain the linear map $d:\tC\to\tC$ by
\[d_v(\Gamma,o):=\frac12 \sum_{a+b=N}\sum_{h^1\neq h^2\in v}d_{v;h^1,h^2}^{a,b}(\Gamma,o),\quad d(\Gamma,o):=\sum_{v\in V_n(\Gamma)}d_v(\Gamma,o).\]
The map $d$ can be also described by 
\[d_v(\Gamma,o)=\sum_{a+b=N}\sum_{0\leq s< t<r}d_{v;h_s,h_t}^{a,b}(\Gamma,o),\]
where $o=[-;-;-;[h_1,\dots,h_r]\sigma,-]$ and $v=\{h_1,\dots,h_r\}$. Remark the relation
\[d_{v;h^1,h^2}^{a,b}(\Gamma,o)=d_{v;h^2,h^1}^{b,a}(\Gamma,o)\]
for half-edges $h^1\neq h^2\in v$. Here well-definedness of $d$ is proved by the relation with the commutativity relation:
\begin{prop}
Using the notations above, $d_vS_{v,h_r;i}(\Gamma,o)$ is equal to zero under the commutativity relation.
\end{prop}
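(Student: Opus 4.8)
The plan is to show that the first differential $d$ descends to the quotient $C(v)=\Cyc(v)[N]/(\text{com. rel.})$, which is precisely the assertion that $d_v$ annihilates each generator $S_{v,h_r;i}$ of the commutativity relation. First I would expand the composite into an explicit multiple sum. Writing $o=[-;-;-;[h_1,\dots,h_r]\sigma,-]$, the element $S_{v,h_r;i}(o)$ is a sum over shuffles $\tau\in\Sh(i,r-1-i)$ of the cyclic words $[h_{\tau(1)},\dots,h_{\tau(r-1)},h_r]_\cyc\sigma$, in which $h_r$ is held fixed at the terminal position and only $h_1,\dots,h_{r-1}$ are permuted. Applying $d_v$ to each such word splits the vertex at every pair of gaps $0\le s<t<r$ and every degree decomposition $a+b=N$. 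Thus $d_vS_{v,h_r;i}(\Gamma,o)$ becomes a sum indexed by triples (shuffle $\tau$, cut $(s,t)$, degrees $(a,b)$), each contributing a graph in which $v$ has been replaced by two vertices $v',v''$ joined by the new edge $e_0=\{h',h''\}$.

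Next I would reorganize this sum by collecting, for each fixed resulting graph $\Gamma'$, all triples that produce it. Fixing $\Gamma'$ amounts to fixing how the half-edges $h_1,\dots,h_{r-1}$ are distributed between the arc carried by $v'$ and the arc carried by $v''$, together with the placement of the new half-edges $h',h''$ and of the fixed half-edge $h_r$. By the definition of $d_{v;h^1,h^2}^{a,b}$, the two new half-edges always occupy the positions adjacent to the cuts and serve as distinguished terminal half-edges for the respective arcs, while $h_r$ remains at the terminal position of whichever arc contains it. Accordingly, on the arc not containing $h_r$ the distinguished half-edge is the new half-edge ($h'$ or $h''$), and on the arc containing $h_r$ it is $h_r$ itself; in either case the terminal-position data matches exactly the one singled out in the definition of the commutativity relations $S_{v';\,\cdot\,}$ and $S_{v'';\,\cdot\,}$ on the new vertices.

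The heart of the argument is a shuffle-factorization. A $(i,r-1-i)$-shuffle of $h_1,\dots,h_{r-1}$ followed by a cut of the resulting cyclic word into two arcs factors uniquely as a choice of how many elements of each of the two shuffled blocks land in each arc, followed by an \emph{independent} shuffle within each arc. Consequently, for a fixed $\Gamma'$ the residual sum over the remaining shuffle freedom is exactly a relation of the form $S_{v',h';j}$ on the $v'$-factor (or $S_{v'',h'';j'}$, resp.\ $S_{v'',h_r;j'}$, on the $v''$-factor), that is, a commutativity relation at one of the new vertices. Since $O_\com(W,\Gamma')$ already quotients each $C(v')$ and $C(v'')$ by its commutativity relations, every such residual sum vanishes, and therefore $d_vS_{v,h_r;i}(\Gamma,o)=0$ in $\tC$.

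I expect the main obstacle to be the sign bookkeeping: one must verify that the Koszul sign $\epsilon$ attached to each split term, after being reindexed through the shuffle-factorization, agrees with the sign prescribed by $S_{v',h';j}$ (resp.\ $S_{v'',h'';j'}$) on the corresponding arc. Here the hypothesis that $N$ is even enters, exactly as in Lemma \ref{derstr}, so that transposing the edge-label degrees $a,b$ and commuting the suspension symbols $\sigma$ past graded elements introduce no stray signs; the symmetry $d_{v;h^1,h^2}^{a,b}=d_{v;h^2,h^1}^{b,a}$ is what lets the two arcs play interchangeable roles. One must also confirm that the shuffle-factorization is a genuine bijection onto the indexing set of each residual relation, so that no term is counted twice and no uncancelled remainder survives.
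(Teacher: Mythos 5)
Your overall route is the paper's: expand $d_vS_{v,h_r;i}$ as a sum over triples (shuffle, cut, degree splitting), regroup, and identify each residual sum with a commutativity relation at one of the two new vertices, so that everything vanishes in the quotient. The shuffle-factorization you invoke is, in substance, the poset bijection the paper sets up with the diagrams involving $[1,i]+[i+1,r-1]$.

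However, your regrouping as stated has a real imprecision, and it sits exactly at the point you flag (``no term counted twice, no uncancelled remainder''). Grouping ``by fixed resulting graph $\Gamma'$'' and then claiming the residual freedom is an \emph{independent} shuffle within each arc is not consistent: once the distribution of $h_1,\dots,h_{r-1}$ between $v'$ and $v''$ is fixed, the two arc-arrangements are \emph{not} independent --- the position of the new half-edge $h''$ inside the $v''$-arc is pinned by that distribution (it must sit in the gap left by the extracted elements in each block), and moreover only special distributions occur at all (the extracted elements taken from a single block necessarily form a contiguous segment of it, since shuffles are order-preserving on blocks). The paper resolves this with an asymmetric, case-dependent grouping, and that dichotomy is the actual content of the proof: (1) if the cut-out arc meets \emph{both} shuffle blocks ($1\le p\le l-1$ in the paper's notation), one fixes the entire arrangement of $v''$ (the datum $\rho$, which records where $h''$ sits) and sums over the $(p,l-p)$-shuffles $\tau_1$ of the $v'$-arc; this residual sum is $S_{v',h';p}(o_\rho)$. (2) If the cut-out arc lies in a \emph{single} block, it is forced to be a contiguous segment $\{a+1,\dots,a+l\}$ of that block, so there is \emph{no} residual freedom on the $v'$-side at all; a single term is not a commutativity relation, since $S_{v,h;s}$ is only defined for $0<s<r-1$. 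Here one must instead sum over the arrangements $\tau_2$ of the $v''$-arc (with $h''$ treated as occupying the place of the extracted segment), and the residual sum is $S_{v'',h_r;i-l+1}(o_{a,l})$. Your parenthetical ``(or $S_{v'',h'';j'}$, resp.\ $S_{v'',h_r;j'}$)'' gestures at case (2), but you never say when that case applies or why the $v'$-side grouping fails there; making this dichotomy explicit is precisely what turns your plan into the paper's proof.
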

\begin{proof}
For integers $p,q$, we define the linear ordered set $[p,q]$ by $\{p<p+1<\cdots <q-1<q\}$. If $p>q$, put $[p,q]=\emptyset$. For partial ordered sets $P_1,P_2$, we denote their direct sum by $P_1+P_2$ (in the category of posets), and their ordinal sum by $P_1\oplus P_2$. Then a $(p,q)$-shuffle is equivalent to the inverse of an order-preserving bijection $[1,p]+[p+1,p+q]\to [1,p+q]$. 

Let $\tau^{-1}:[1,i]+[i+1,r-1]\to [1,r-1]$ be an $(i,r-i-1)$-shuffle and $0\leq s<t<r$ integers. Put $L=\tau([s+1,t])$ and $l=t-s$.

If $\tau(s+1),\dots,\tau(t)$ are $\leq i$, then we have $\tau(s+m)=\tau(s+1)+(m-1)$ for $1\leq m\leq t-s$ since $[1,i]\to \tau^{-1}([1,i])$ is an isomorphism between posets. Put $a=\tau(s+1)-1$. Then we obtain the shuffle $\tau_2$ by $\tau$:
\[\xymatrix{[1,i-l+1]+[i-l+2,r-l]\ar[r]^-{\tau_2^{-1}}&[1,r-l]\\
[1,a]\oplus\{*\}\oplus [a+l,i]+[i+1,r-1]\ar[r]^-{\text{bij.}}\ar[u]^{\text{canonical isom.}}&[1,s]\oplus\{*\}\oplus[t+1,r-1]\ar[u]_{\text{canonical isom.}}\\[1,i]+[i+1,r-1]\ar@{->>}[u]\ar[r]^-{\tau^{-1}}&[1,r-1]\ar@{->>}[u]}\]
The shuffle $\tau$ can recover from a pair $(a,l,\tau_2)$, where $\{a+1,\dots,a+l\}\subset [1,i]$ and an $(i-l+1,r-i-1)$-shuffle $\tau_2$.

Similarly, if $\tau(s+1),\dots,\tau(t)$ are $\geq i+1$, we can obtain a triple $(a,l,\tau_2)$, where $\{a+1,\dots,a+l\}\subset [i+1,r-1]$ and an $(i-l+1,r-i-1)$-shuffle $\tau_2$.

Otherwise, put $p=\#(L\cap [1,i])$. Then we obtain the shuffle $\tau_1$ by restricting $\tau$:
\[\xymatrix{[1,p]+[p+1,l]\ar[d]_{\text{canonical isom.}}\ar[r]^-{\tau_1^{-1}}&[1,l]\ar[d]^{\text{canonical isom.}}\\
L\ar@{^{(}-_>}[d]\ar[r]^-{\tau^{-1}}&\tau^{-1}(L)\ar@{^{(}-_>}[d]\\
[1,i]+[i+1,r-1]\ar[r]^-{\tau^{-1}}&[1,r-1]}\]
We consider $\bar{L}=([1,i]+[i+1,r-1])\setminus L$ and the order-preserving bijection $\rho^{-1}:\bar{L}\to [1,s]\oplus [t+1,r-1]$ defined by the restriction of $\tau^{-1}$. The shuffle $\tau$ recovers from a pair $(\rho,\tau_1)$, where $\rho^{-1}:\bar{L}\to [1,s]\oplus [t+1,r-1]$ is an order-preserving bijection and $\tau_1$ is a $(p,l-p)$-shuffle.

Thus we have
\begin{align*}
d_vS_{v,h_r;i}([h_1,\dots,h_r]\sigma)&=\sum_{l=1}^{r-1}\left(\sum_{p=1}^{l-1}\sum_\rho\sum_{\tau_1}o_\rho^{\tau_1^{(v',h')}}+\sum_a\sum_{\tau_2} o_{a,l}^{\tau_2^{(v'',h_r)}}\right)\\
&=\sum_{l=1}^{r-1}\left(\sum_{p=1}^{l-1}\sum_\rho S_{v',h';p}(o_\rho)+\sum_aS_{v'',h_r;i-l+1}(o_{a,l})\right),\end{align*}
where $L=\{1,\dots,r-1\}\setminus \bar{L}=\{u_1<\cdots<u_p\text{ as integers}\}$,
\begin{align*}
o_\rho&=\epsilon[[h_{u_1},\dots,h_{u_p},h']\sigma,[h_{\rho(1)},\dots,h_{\rho(s)},h'',h_{\rho(t+1)},\dots,h_{\rho(r-1)},h_r]\sigma],\\
o_{a,l}&=\epsilon'[[h_{a+1},\dots,h_{a+l},h']\sigma,[h_1,\dots,h_a,h'',h_{a+l+1},\dots,h_r]\sigma],\end{align*}
and $\epsilon,\epsilon'$ are appropriate Koszul signs. (In these equations, the subscriptions $\cyc$ are omitted.)
\end{proof}

\subsubsection{Definition of the second differential $L$} For $\Gamma\in \G(N)$, let $i_v(\Gamma)$ be the $N$-graded graph obtained by converting a normal vertex $v$ of degree $-1$ to a special vertex. We define the linear map $i_v:O_\com(W,\Gamma)\to O_\com(W,i_v(\Gamma))$ for $o\in O_\com(W,\Gamma)$ such that \[i_v(\Gamma,[-;-;-;c,-])=(i_v(\Gamma),[-;-;-,c;-])\] for $c\in C(v)$ if $v$ has degree $-1$ and valency $\geq 3$, and $i_v(\Gamma,o)=0$ if $v$ does not. Since the relation
 \[i_{v_1}S_{v_2,h_r;k}(\Gamma,o)=S_{v_2,h_r;k}i_{v_1}(\Gamma,o)\]
for $v_1,v_2\in V_i(\Gamma)$ holds clearly, the map $i_v$ is well-defined. Then the linear map $L:\tC\to\tC$ is defined by
\[L:=id-di,\]
where the linear map $i:\tC\to \tC$ is obtained by
\[i(\Gamma,o):=\sum_{v\in V_n(\Gamma)}i_v(\Gamma,o).\] 
The map $L$ is also described by
\[L(\Gamma,o)=\sum_{v\in V_n(\Gamma)}(i_{v'}+i_{v''})d_v(\Gamma,o)\]
since $i_ud_v=d_vi_u$ for normal vertices $u\neq v$.

Then $d$, $i$, and $L$ have (cohomological) bidegree $(1,0)$, $(-1,1)$ and $(0,1)$ respectively.


\subsubsection{Definition of the underlying bigraded vector space $\C$} 

The space $\C$ is the quotient space of $\tC$ by
\begin{itemize}
\item \textbf{($A_\infty$-relation)} \[R_v(\Gamma,o):=i_{v'}i_{v''}d_v(\Gamma,o)=0\] for $\Gamma\in\G(N)$ and a normal vertex $v$ (of degree $-2$). 
\begin{figure}[h]
\centering
\includegraphics[width=5.5cm]{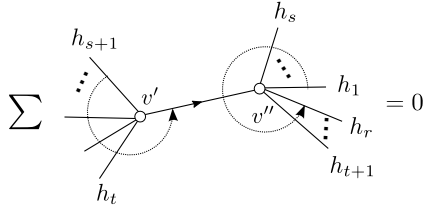}
\caption{$A_\infty$-relation. }
\end{figure}%
\item \textbf{(Cut-off relation)} For $\Gamma\in \G(N)$ and $e=\{h_1,h_2\}\in E(\Gamma)$, we define the $N$-graded graph $\Gamma_e$ as follows:
\[H(\Gamma_e)=H(\Gamma)\amalg \{\bar{h}_1,\bar{h}_2\},\] \[E(\Gamma_e)=(E(\Gamma)\setminus\{e\})\amalg\{\{h_1,\bar{h}_1\},\{h_2,\bar{h}_2\}\},\]
\[V(\Gamma_e)=V(\Gamma)\amalg\{\{\bar{h}_1\},\{\bar{h}_2\}\},\]
\[|\bar{h}_1|=N-|h_1|=:a,\quad |\bar{h}_2|=N-|h_2|=:b.\]
Then
\[(\Gamma,[[h_1,h_2],-;-;-;-])=\sum_{|x^i|=a,|x^j|=b}\omega_{ij}(\Gamma_e,[[h_1,\bar{h}_1],[\bar{h}_2,h_2],-;x^i\sigma^{-1},x^j\sigma^{-1},-;-;-]),\]
where $\{x^i\}$ is a homogeneous basis of $W$ and $(\omega_{ij})$ is the inverse matrix of $(\omega(x^i,x^j))$.
\begin{figure}[h]
\centering
\includegraphics[width=7.5cm]{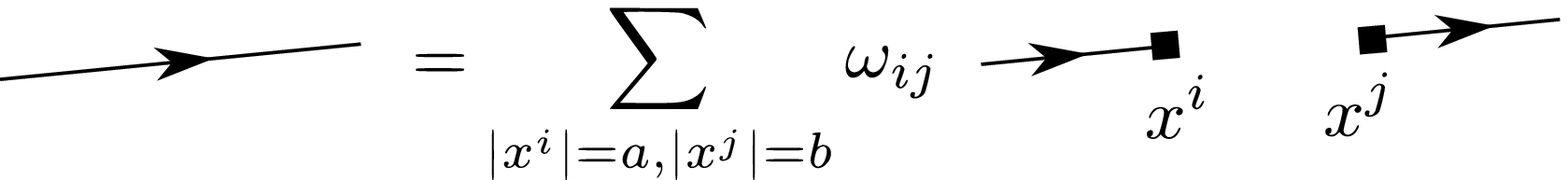}
\caption{Cut-off relation. }
\end{figure}%
\end{itemize}
Remark that $\C$ is generated by $W$-labeled graphs with only one internal vertex by cut-off relation.

\subsubsection{On well-definedness of three operators $d,i,L$ on $\C$} The endomorphisms $d$, $i$ and $L$ of $\tC$ induce endomorphisms of $\C$ by the equations
\[dR_v(\Gamma,o)=\sum_{u\neq v}R_vd_u(\Gamma,o),\quad iR_v(\Gamma,o)=\sum_{u\neq v}R_vi_u(\Gamma,o)\]
for a normal vertex $v$ of an $N$-graded graph $\Gamma$.

\subsubsection{On two differentials $d,L$ on $\C$} 
\begin{prop}\label{dd}
The bigraded vector space $\C$ is a double complex with respect to differentials $d$ and $L$. We call $\C$ \textbf{double graph complex}.
\end{prop}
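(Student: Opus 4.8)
The plan is to establish the three identities characterising a double complex — $d^2=0$, $L^2=0$ and the anticommutation $dL+Ld=0$ — arranging matters so that the two square-zero relations carry all the weight. Since $L=i\circ d-d\circ i$, one has
\[dL+Ld=d\circ i\circ d-d^2\circ i+i\circ d^2-d\circ i\circ d=i\circ d^2-d^2\circ i,\]
so the mixed relation is a purely formal consequence of $d^2=0$ and needs no separate combinatorial input. It therefore suffices to prove $d^2=0$ and $L^2=0$.

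The relation $d^2=0$ should already hold on $\tC$, and it is the technical heart of the statement. Writing $d=\sum_{v\in V_n(\Gamma)}d_v$, where $d_v$ replaces a normal vertex $v$ by two normal vertices $v',v''$ joined by a new edge, I would expand $d^2$ over ordered pairs of splittings and separate two types. When the two splittings are performed at independent normal vertices $v\neq w$, the compositions $d_wd_v$ and $d_vd_w$ land on the same $N$-graded graph, and their orientations differ only by a transposition in the edge- and vertex-ordering data of $\tilde{O}_\com$; as in the classical proof that $d_{CE}^2=0$, this transposition carries a sign $-1$ and the two terms cancel. When instead one splits $v$ into $v',v''$ and then splits one of the halves, the resulting terms redistribute the cyclic word at $v$ into three blocks joined by a path of two edges; these cancel by the co-Jacobi mechanism, i.e. they assemble into commutativity relations in $C(v)$ exactly as in the identity $d_vS_{v,h_r;i}\equiv0$ proved above. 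I expect the sign bookkeeping in this nested case — propagating the Koszul signs, the edge-direction factors $O(e_0)$ and the $N$-fold suspensions through the regrouping — to be the main obstacle of the proof.

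Finally, for $L^2=0$ I would use the description $L=\sum_{v\in V_n(\Gamma)}(i_{v'}+i_{v''})d_v$, in which $L$ splits a normal vertex and converts precisely one of the two new vertices into a special one. Expanding $L^2$ on $\tC$ and invoking $d^2=0$ together with the commutation $i_ud_v=d_vi_u$ for $u\neq v$, the contributions in which the two operations act at independent vertices cancel pairwise, just as in the first type above. The surviving contributions — in which the normal half left by the first split is itself split and specialised — organise into commutativity relations and into the $A_\infty$-relation terms $R_v=i_{v'}i_{v''}d_v$, all of which are zero in the quotient $\C$. Hence $L^2=0$ holds on $\C$, and it is exactly here that the passage from $\tC$ to $\C$ is indispensable: the $A_\infty$-relation is the graph-theoretic shadow of $\delta^2=0$. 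Together with the two relations above, this shows that $(\C,d,L)$ is a double complex.
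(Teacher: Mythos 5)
Your skeleton matches the paper's: $d^2=0$ is proved on $\tC$, the mixed relation $dL+Ld=id^2-d^2i=0$ is formal, and $L^2=0$ holds only on the quotient $\C$ because it needs the $A_\infty$-relation. The gap sits exactly at the step you yourself single out as the technical heart. For the nested terms of $d^2$ you propose that, after splitting $v$ into $v'$ and $v''$, the graphs obtained by re-splitting one of the halves ``assemble into commutativity relations in $C(v)$'' as in the identity $d_vS_{v,h_r;i}\equiv 0$. That identity plays a different role in the paper: it is what makes each individual operator $d_v$ descend to the quotient $C(v)=\Cyc(v)[N]/(\text{com. rel.})$, i.e.\ it is the well-definedness statement, not the vanishing of double splits. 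The nested terms of $d^2$ do not merely vanish modulo shuffle relations; they cancel pairwise, by the same orientation-sign mechanism as in your independent-vertex case. Concretely, with the convention that the new edge created by $d_v$ is directed from $v'$ to $v''$, the paper proves (Figure~\ref{split}) the identity $d_{v'}d_v(\Gamma,o)=-d_{v''}d_v(\Gamma,o)$: every graph in which the cyclic word at $v$ has been cut into three arcs joined by a path of two new edges arises exactly twice, once as a term of $d_{v'}d_v$ and once as a term of $d_{v''}d_v$, and the two occurrences carry opposite orientations. Hence $d_{v'}d_v+d_{v''}d_v=0$, which together with $d_ud_v=-d_vd_u$ for $u\neq v$ gives $d^2=0$ by cancellation. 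If you pursue the ``co-Jacobi'' route you will be hunting for shuffle sums that are not there; the real work is the orientation bookkeeping in the pairwise matching, which your proposal leaves untouched.

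For $L^2=0$, your direct expansion of $\sum_{u,v}(i_{u'}+i_{u''})d_u(i_{v'}+i_{v''})d_v$ could likely be pushed through, but it sends you back into the same nested-splitting sign analysis, and your expectation that commutativity relations appear among the surviving terms is another symptom of the same confusion: in the paper's computation only the $A_\infty$-terms survive. The paper's route is cleaner and worth adopting once $d^2=0$ is fixed: first compute $iL-Li=\sum_v(i_{v''}i_{v'}+i_{v'}i_{v''})d_v=2\sum_vR_v$, so that $iL=Li$ holds in $\C$ (and only there); then $L^2=0$ is pure algebra from $L=id-di$, $d^2=0$ and $iL=Li$, via $L^2=(id-di)L=idid-didi$ and $L^2=L(id-di)=-idid+didi$, whence $2L^2=0$. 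This isolates the use of the $A_\infty$-relation in a single identity, which is indeed the conceptual point you correctly identified: the passage from $\tC$ to $\C$ is what makes $L^2=0$ true.
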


\begin{proof}
First, we show the equation $d^2=0$. It is proved in the same way as Kontsevich's original graph complex. For a normal vertex $v$ of an $N$-graded graph $(\Gamma,o)$, let $v',v''$ be new vertices obtained by splitting at $v$. Then
\[d_{v'}d_v(\Gamma,o)=-d_{v''}d_v(\Gamma,o)\quad d_ud_v(\Gamma,o)=-d_vd_u(\Gamma,o)\]
for $u\neq v$ holds. The first equation is shown by Figure \ref{split}. In the figure, $v'$ and $v''$ are defined such that the direction of the new edge is from $v'$ to $v''$ in Figure \ref{split}, and $(v')',(v')'',(v'')',(v'')''$ are also defined in the same way. So we obtain  $d^2(\Gamma,o)=0$ by cancellation. 
\begin{figure}[h]
\centering
\includegraphics[width=10cm]{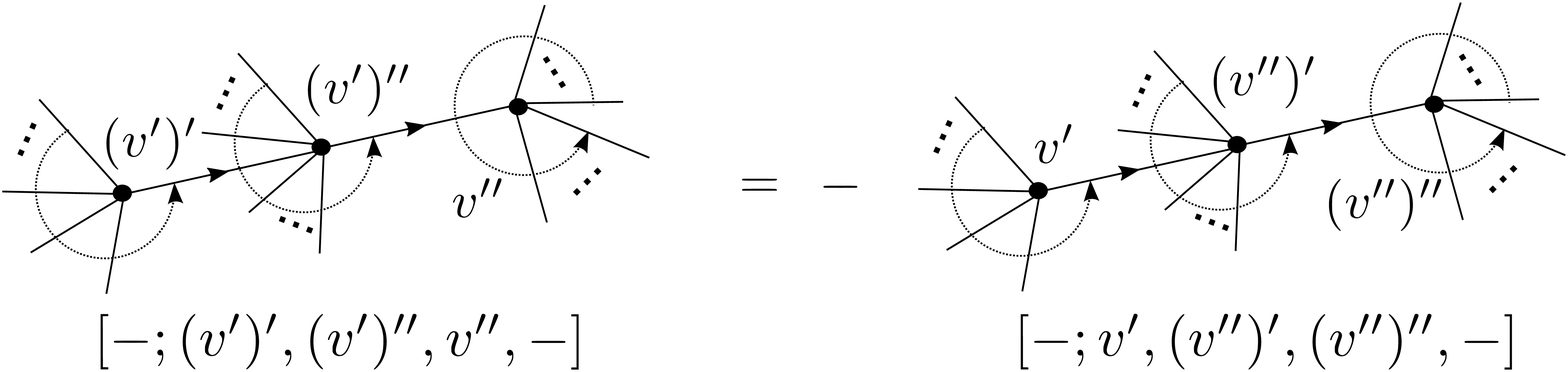}
\caption{$d_{v'}d_v(\Gamma,o)=-d_{v''}d_v(\Gamma,o)$.}
\label{split}
\end{figure}%

Next, we show $L^2=0$. From the equation in $\tC$
\begin{align*}
(iL-Li)(\Gamma,o)
&=\left(\sum_{u}i_u(i_{v'}+i_{v''})d_v-\sum_{u\neq v}(i_{v'}+i_{v''})d_vi_u\right)(\Gamma,o)\\&=\sum_{ v}(i_{v''}i_{v'}+i_{v'}i_{v''})d_v(\Gamma,o)\\
&=2\sum_vR_v(\Gamma,o),\end{align*}
we obtain the relation $iL-Li=0$ in $\C$. So the equations
\[L^2=(id-di)L=idL-diL=idL-dLi=idid-didi,\]
\[L^2=L(id-di)=Lid-Ldi=iLd-Ldi=-idid+didi\]
hold. Then we obtain $L^2=0$. Since $Ld+dL=-did+did=0$ holds by definition of $L$, we get the proposition.\end{proof}

\subsection{Construction of the map to Chevalley-Eilenberg complexes}\label{chainmap}
Let $(W,\omega)$ and $Z$ be as Section \ref{defgraph} and $\delta$ be a symplectic and quadratic differential of homological degree $-1$ on $\ct$. In this section, the Lie algebra $\Der_\omega(\ct)$ of symplectic derivations is denoted by $\D$. We construct a double chain map \[\C\to \CE(\D)\]
from the graph complex $\C$ to the Chevalley-Eilenberg complex of the dgl $(\D,\ad(\delta))$. 


Let $(\Gamma,o)$ be an oriented graph and $\hat{o}$ be a lift of $o$. Put 
\[k=\#V(\Gamma),\quad k_e=\#V_e(\Gamma),\quad k_s=\#V_s(\Gamma),\quad k_n=\#V_n(\Gamma),\]
\begin{align*}
(r_1,\dots,r_k)&:=(\underbrace{1,\dots,1}_{k_e},a_1,\dots,a_{k_s+k_n})\\
&:=(\underbrace{1,\dots,1}_{k_e},\# v^s_1,\dots,\# v_{k_s}^s,\# v_1,\dots,\# v_{k_n})\end{align*}
We denote by $\tau(\hat{o})$ the linear isomorphism (the permutation of factors of the tensor product) 
\[W^{\otimes r_1}\otimes \cdots\otimes W^{\otimes r_k} \to W^{\otimes 2}\otimes \cdots \otimes W^{\otimes 2}=(W^{\otimes2})^{\otimes l}\]
corresponding to the permutation of half-edges
\[(h_1,\dots,h_{k_e},\hat{c}_1^s,\dots,\hat{c}_{k_s}^s,\hat{c}_1,\dots,\hat{c}_{k_n})\mapsto (\hat{o}_1,\dots,\hat{o}_l).\]
Then we define the linear map $\alpha(\Gamma,\hat{o})$ of cohomological degree $(l-k)N$
by composing these maps
\[\alpha(\Gamma,\hat{o}):W[-N]^{\otimes k_e}\otimes \Der_\omega(\ct)^{\otimes (k_s+k_n)} \overset{\text{proj.}}{\to} W[-N]^{\otimes k_e}\otimes \bigotimes_{i=1}^{k_s+k_n}\Der^{a_i+2}_\omega(\ct)  \]
\[\overset{\Phi}{\simeq} W[-N]^{\otimes k_e}\otimes \bigotimes_{i=1}^{k_s+k_n}W(a_i)[-N] \subset\bigotimes_{i=1}^{k}(W^{\otimes r_i}[-N])\overset{\sigma^{\otimes k}}{\to}  \bigotimes_{i=1}^{k}W^{\otimes r_i}\overset{\tau(\hat{o})}{\to }(W^{\otimes2})^{\otimes l}\overset{\omega_E}{\to} \r,\]
where $\Phi:=\id^{\otimes k_e}_{W[-N]}\otimes \Phi_\omega^{\otimes(k_s+k_n)}$, $\omega_E:=\omega_{e_1}\otimes \cdots \otimes \omega_{e_l}$ and $\omega_{e_j}:=\omega_{(|h^{e_j}_1|,|h^{e_j}_2|)}$ if $e_j=\{h^{e_j}_1,h^{e_j}_2\}$. Here we denote by $\omega_{(d_1,d_2)}$ for integers $d_1,d_2$ the composition of the projection $W\otimes W\to W_{d_1}\otimes W_{d_2}$ and the restriction of $\omega$ to $W_{d_1}\otimes W_{d_2}$. The map $\alpha(\Gamma,\hat{o})$ is independent of a choice of linear orders of half-edges representing cyclic orders, and compatible with the commutativity relation. 


We define the map $\hat{\psi}(\Gamma,\hat{o}):\D^{\otimes k_n}\to\r$ by
\[\hat{\psi}(\Gamma,\hat{o})(D_1,\dots,D_{k_n}):=\alpha(\Gamma,\hat{o})(w_1,\dots,w_{k_e},\underbrace{\delta,\dots,\delta}_{k_s},D_1,\dots,D_{k_n})\]
for $D_i\in\D$. Restricting the map\footnote{For a graded vector space $V$, the injective map $\Alt_n:\Lambda^nV\to V^{\otimes n}$ is defined by
\[\Alt_n(v_1\cdots v_n)=\frac{1}{n!}\sum_{\sigma\in\mathfrak{S}_n}\bar{\epsilon}(\sigma)v_{\sigma(1)}\otimes \cdots \otimes v_{\sigma(n)}\]
for $v_1,\dots,v_n\in V$, where $\bar{\epsilon}(\sigma)$ is the corresponding anti-Koszul sign.} on the exterior algebra, we can get the map 
\[\psi(\Gamma,o)=\hat{\psi}(\Gamma,\hat{o})\circ \Alt_{k_n}:\Lambda^{k_n}\D\to\r.\]
The map is independent of a representation $\hat{o}$ of $o$ by the definition of an orientation. So we obtain the map $\psi:\C\to \CE(\D)$. 

\begin{figure}[h]
\begin{center}
\includegraphics[width=9cm]{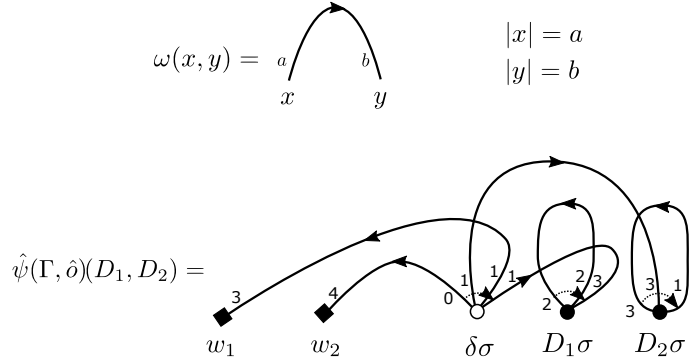}
\caption{An example of $\hat{\psi}(\Gamma,\hat{o})(D_1,D_2)$ ($\Gamma$ is the decorated graph in Figure \ref{ex2}.)}
\label{map-ex}
\end{center}
\end{figure}%

Well-definedness of $\psi$ is proved by the correspondence through $\psi$ between relations in the graph complex $\C$ correspond to properties of derivations as the following table:
\begin{table}[htb]
  \begin{tabular}{c|c}
    graph complex & derivations \\\hline
    cyclicity & symplectic derivation \\
    commutativity & Lie derivation\\
    $A_\infty$-relation & $\delta^2=0$  \\
    cut-off& symplectic form 
  \end{tabular}
\end{table}

By definition, it is clear except for the $A_\infty$-relation. The correspondence for the $A_\infty$-relation is proved in the end of the proof of the following theorem.

\def\St{\mathrm{St}}
\begin{thm}\label{graphmap}
The map $\psi:\C\to \CE(\D)$ is a double chain map.

\end{thm}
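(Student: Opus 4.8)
The plan is to verify separately the two halves of the double-chain-map property, namely compatibility with the first differentials $\psi\circ d=d_{CE}\circ\psi$ and with the second differentials $\psi\circ L=L_\delta\circ\psi$, and along the way to check that $\psi$ annihilates the $A_\infty$-relation so that it is genuinely defined on $\C$ (and not merely on $\tC$). Both identities I would verify at the level of $\tC$, where $\psi$ is given by the same formula, and then pass to the quotient. The second identity I would reduce to the first together with an interior-product identity: since $L=id-di$ on the graph side and $L_\delta=i_\delta d_{CE}-d_{CE}i_\delta$ on the Chevalley-Eilenberg side, once one knows $\psi d=d_{CE}\psi$ together with the compatibility $\psi\circ i=i_\delta\circ\psi$ of the two ``contraction with $\delta$'' operations, one obtains formally
\[\psi L=\psi i\,d-\psi d\,i=i_\delta d_{CE}\psi-d_{CE}i_\delta\psi=L_\delta\psi.\]

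The heart of the argument is the first identity, and the essential input is Lemma \ref{derstr}. I would evaluate both sides on a tuple $(D_1,\dots,D_{k_n+1})\in\D^{\times(k_n+1)}$. On the right, $d_{CE}\psi(\Gamma,o)$ is by definition a signed sum over pairs $i<j$ of $\psi(\Gamma,o)$ evaluated with the bracket $[D_i,D_j]$ inserted at a single normal vertex. On the left, $d(\Gamma,o)$ splits each normal vertex $v$ into two normal vertices $v',v''$ joined by a new edge $e_0$; when $\psi$ is applied and two of the derivations are assigned to $v'$ and $v''$, the construction of $\alpha(\Gamma,\hat o)$ contracts the half-edges of $e_0$ by the factor $\omega_{e_0}=\omega_{(a,b)}$. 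The point is that this contraction, summed over all splittings $a+b=N$ and over the positions $0\le s<t<r$ of the two distinguished half-edges, reproduces exactly the formula for $[D_i,D_j]_\omega$ in Lemma \ref{derstr}: the sum $\sum_{d_1+d_2=N}(\id\otimes\omega_{(d_1,d_2)})$ matches the sum over the two degrees carried by $e_0$, while the two families $\pi_{1;t}^{r_1,r_2}$ and $\pi_{2;s}^{r_1,r_2}$ match the two ways of distributing the remaining half-edges of $v$ between $v'$ and $v''$. Thus, vertex by vertex, the splitting operator $d_v$ is intertwined by $\alpha$ with the insertion of a Lie bracket, and summing over $v$ and over the choice of pair of derivations assigned to $(v',v'')$ yields precisely $d_{CE}\psi(\Gamma,o)$.

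For the interior-product identity $\psi\circ i=i_\delta\circ\psi$ I would argue directly from the definitions of $\hat\psi$ and of $i_v$: converting a normal vertex of degree $-1$ into a special vertex means, under $\alpha$, that the derivation formerly assigned to that vertex is replaced by $\delta$, which is exactly the effect of contracting the corresponding argument of $\psi(\Gamma,o)$ with $\delta$; the alternation $\Alt_{k_n}$ then reconciles the sum $i=\sum_v i_v$ over all eligible vertices with insertion of $\delta$ into a single slot of the alternating form. Finally, for the $A_\infty$-relation, applying $\psi$ to $R_v=i_{v'}i_{v''}d_v$ and combining the two correspondences above expresses $\psi(R_v)$ as $\psi(\Gamma,o)$ with $\delta$ inserted at both $v'$ and $v''$ after splitting $v$; by the bracket interpretation of $d_v$ this is governed by $[\delta,\delta]=2\delta^2$, which vanishes because $\delta$ is a differential. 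Hence $\psi(R_v)=0$ and $\psi$ descends to $\C$.

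The main obstacle will be the sign bookkeeping in the first identity. One must reconcile the Koszul sign carried by the orientation of the split graph, the anti-Koszul sign $\bar\epsilon$ appearing in the definition of $d_{CE}$, the signs produced by $\Alt_{k_n}$, and the signs coming from the reordering permutation $\tau(\hat o)$ together with the $N$-fold suspension symbols $\sigma$. Here the hypothesis that $N$ is even is what lets the suspensions be ignored throughout the sign computation (exactly as already used in the proof of Lemma \ref{derstr}), and it is this evenness that makes the Koszul signs of the graph orientations line up with the anti-Koszul signs of the Chevalley-Eilenberg differential; I would expect the bulk of the written proof to consist of tracking these signs carefully.
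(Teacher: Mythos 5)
Your proposal is correct and follows essentially the same route as the paper's proof: the $d$-compatibility is reduced, vertex by vertex, to Lemma \ref{derstr} (with the sum over $a+b=N$ on the new edge matching $\sum_{d_1+d_2=N}(\id\otimes\omega_{(d_1,d_2)})$ and the two half-edge distributions matching $\pi_{1;t}$ and $\pi_{2;s}$), the identity $\psi\circ i=i_\delta\circ\psi$ is checked directly and combined with the $A_\infty$-relation via $[\delta,\delta]=0$ to descend to $\C$, and the $L$-compatibility is then deduced formally from $L=id-di$ and $L_\delta=i_\delta d_{CE}-d_{CE}i_\delta$. The only difference is that the paper carries out the sign bookkeeping you defer (via explicit lifts $\hat{o}^1,\hat{o}^2$ and the normalization $d_{CE}(\chi\circ\Alt_p)=\frac12\sum_s(-1)^{s-1}\chi\circ(1^{\otimes s-1}\otimes[\ ,\ ]\otimes 1^{\otimes p-s})\circ\Alt_{p+1}$), which is a matter of detail rather than of approach.
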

\begin{proof}
First, we shall show that $d_{CE}\psi=\psi d$ on $\tC$. To prove this, we need Lemma \ref{derstr}. 

For an oriented graph $(\Gamma,o)$, we define the two lifts $\hat{o}^1$, $\hat{o}^2$ on $\Gamma_{v_i;h_{\nu},h_\mu}^{s,t}$ as follows: 
\[\hat{o}^1=((h',h''),-;-;-;v_1,\dots,v_i',v_i'',\dots,v_p),\]\[ \hat{o}^2=((h'',h'),-;-;-;v_1,\dots,v_i'',v_i',\dots,v_p),\]
\[v_i'=(h_{\nu+1}^{i},\dots,h_{\mu}^{i},h'),\quad v_i''=(h_1^{i},\dots,h_{\nu}^{i},h'',h_{\mu+1}^{i},\dots,h_{r_i}^{i}),\]
where $r_i=\# v_i$. The signs $\epsilon_i$ defined by the equations
\[o^1:=\epsilon_1[\hat{o}^1],\quad o^2:=\epsilon_2[\hat{o}^2],\quad d^
{s,t}_{v_i,h_{\nu},h_\mu}o=(-1)^{i-1}o^1=(-1)^{i-1}o^2.\]
So we obtain
\begin{align*}
d(\Gamma,o)&=\sum_{i=1}^k\sum_{\nu<\mu}\sum_{a+b=N}(-1)^{i-1}(\Gamma_{v_i;h_{\nu},h_\mu}^{a,b},o^1)\\
&=\sum_{i=1}^k\sum_{\nu<\mu}\sum_{a+b=N}(-1)^{i-1}(\Gamma_{v_i;h_{\nu},h_\mu}^{a,b},o^2).
\end{align*}
Note that
\[d_{CE}(\chi\circ \Alt_p)=\frac{1}{2}\sum_{s=1}^p(-1)^{s-1}\chi\circ (1^{\otimes s-1}\otimes [\ ,\ ]\otimes 1^{\otimes p-s})\circ \Alt_{p+1}\]
for a linear map $\chi:W^{\otimes r_1}[-N]\otimes \cdots\otimes W^{\otimes r_p}[-N]\to \r$ and the anti-symmetrization $\Alt_p$ for $p$-components. So 
we should prove
\[ \hat{\psi}(\Gamma,\hat{o})\circ (1^{\otimes i-1}\otimes [\ ,\ ]\otimes 1^{\otimes p-i-1})\]\[=\sum_{\nu<\mu}\sum_{a+b=N}(\epsilon_1\hat{\psi}(\Gamma_{v_i;h_{\nu},h_\mu}^{a,b},\hat{o}^1)+\epsilon_2\hat{\psi}(\Gamma_{v_i;h_{\nu},h_\mu}^{a,b},\hat{o}^2)\circ\tau),\]
where the map $\tau$ means the permutation 
\[X_1\otimes \cdots \otimes (x_{\nu+1}\cdots x_\mu x')\otimes (x_1\cdots x_\nu x''x_{\mu+1}\cdots x_{r_i})\otimes\cdots \otimes X_p\]\[\mapsto \epsilon \cdot X_1\otimes \cdots \otimes (x_1\cdots x_\nu x'x_{\mu+1}\cdots x_{r_i})\otimes (x_{\nu+1}\cdots x_\mu x'')\otimes\cdots\otimes X_p\]
and $\epsilon$ is the Koszul sign. It follows from the equations
\[ \hat{\psi}(\Gamma,\hat{o})\circ (1^{\otimes i-1}\otimes \sigma^{-1}(1\otimes\omega_{(a,b)})\pi_{1;t}^{r',r''}\sigma^{\otimes2}\otimes 1^{\otimes p-i-1})=\epsilon_1\hat{\psi}(\Gamma_{v_i;h_{\nu},h_\mu}^{a,b},\hat{o}^1),\]
\[\hat{\psi}(\Gamma,\hat{o})\circ (1^{\otimes i-1}\otimes \sigma^{-1}(1\otimes\omega_{(a,b)})\pi_{2;t}^{r',r''}\sigma^{\otimes2}\otimes 1^{\otimes p-i-1})=\epsilon_2\hat{\psi}(\Gamma_{v_i;h_{\nu},h_\mu}^{a,b},\hat{o}^2)\circ\tau,\]
for $r'=\mu-\nu+1$, $r''=r-\mu+\nu+1$, and $t=\nu+1$. The first equation is verified as follows: we have by the definition of $\hat{\psi}$
\[\omega(x',x'')\hat{\psi}(\Gamma,\hat{o})(X_1,\dots,X_p)=\epsilon_1\hat{\psi}(\Gamma_{v_i;h_{\nu},h_\mu}^{a,b},\hat{o}^1)(X_1,\dots,X_i',X_i'',\dots,X_p)\]
for $X_s\in W^{\otimes r_s}$, $x'\in W_a$, and $x''\in W_b$. Here we put $X_i'=x_{\nu+1}\cdots x_{\mu}x'\sigma^{-1}$ and $X_i''=x_1\cdots x_{\nu}x''x_{\mu+1}\cdots x_{r}\sigma^{-1}$ for $X_i=x_1\cdots x_r\sigma^{-1}$. So we obtain the first equation from
\[\epsilon_1X_i\omega(x',x'')=\sigma^{-1}(1\otimes\omega)\pi_{1;t}^{r',r''}\otimes\sigma^{\otimes2}(X_i'\otimes X_i'')\]
The second is also verified in the same way. 

Next, we shall prove $i_\delta \psi=\psi i$ on $\tC$. The ordering \[\hat{o}_i:=(-;-;-;v_i,v_1,\dots,\hat{v}_i,\dots,v_p)\] is a lift of $\bar{\epsilon}_i\cdot o$, where $\bar{\epsilon}_i$ is the anti-Koszul sign of the permutation 
\[(v_1,\dots,v_p)\mapsto (v_i,v_1,\dots,\hat{v}_i,\dots,v_p).\] So we have
\begin{align*}
&\psi i(\Gamma,o)(X_{1},\dots,X_{p-1})\\
=&\sum_{s=1}^{j}\bar{\epsilon}_i\cdot\alpha(i_{v_i}(\Gamma),\hat{o}_i)(w_1,\dots,w_{k_e},\underbrace{\delta,\dots,\delta}_{k_s+1},\Alt_{p-1}(X_{1},\dots,X_{p-1}))\\
=&\sum_{s=1}^j\sum_{\pi\in\mathfrak{S}_{p-1}}\bar{\epsilon}\cdot
\alpha(\Gamma,\hat{o})(w_1,\dots,w_{k_e},\underbrace{\delta,\dots,\delta}_{k_s},X_{\pi(1)},\dots,\delta,\dots,X_{\pi(p-1)})\\
=&\alpha(\Gamma,\hat{o})(w_1,\dots,w_{k_e},\underbrace{\delta,\dots,\delta}_{k_s},\Alt_p(\delta,X_{1},\dots,X_{p-1}))\\
=&i_\delta\psi(\Gamma,o)(X_{1},\dots,X_{p-1})
\end{align*}
where $\bar{\epsilon}$ is the anti-Koszul sign of
\[(\delta,X_{1},\dots,X_{p-1})\mapsto (X_{\pi(1)},\dots,\delta,\dots,X_{\pi(p-1)}).\]

From the discussion above, the relation $\psi(R_v(\Gamma,o))=0$ follows from
\[\psi(R_v(\Gamma,o))=\psi(i_{v'}i_{v''}d_v(\Gamma,o))=\psi(\Gamma,o)([\delta,\delta],-)=0.\]
Thus $\psi$ induces the map $\psi:\C\to \CE(\D)$. Furthermore, since $\psi$ is commutative with $d$ and $i$, so is $L$. So we complete the proof.
\end{proof}

The group $\Sp(W,\delta)$ acts on $\C$ by the action on the their labels. Then, the chain map $\psi:\C\to \CE(\D)$ is $\Sp(W,\delta)$-equivariant clearly. Especially we can consider the $\Sp(W,\delta)$-invariant part $\C^{\Sp(W,\delta)}$ of the complex $\C$. It has the double subcomplex $\Ccom(N,Z)$ consisting of $N$-graded graphs which have no external vertex. This complex $\Ccom(N,Z)$ does not depend on the symplectic vector space $W$. It depends only a range $Z$  of degrees and a degree $N$ of a symplectic inner product.

\begin{rem}
We can define the associative version of $\C$ as follows. Set 
\[\tilde{O}_\text{ass}(W,\Gamma):=\bigodot_{e\in E(\Gamma)}O(e)\otimes\bigodot_{u\in V_e(\Gamma)}W[-N]_{|u|}\otimes\bigwedge_{v^s\in V_s(\Gamma)}\Cyc(v^s)[N] \otimes \bigwedge_{v\in V_n(\Gamma)}\Cyc(v)[N], \] 
\[\Cass(W):=\bigoplus_{\Gamma\in\G(N)}O_\text{ass}(W,\Gamma),\quad O_\text{ass}(W,\Gamma):=\tilde{O}_\text{ass}(W,\Gamma)_{\Aut(\Gamma)}.\]
Then $(\Cass(W),d,L)$ is also a double $\Sp(W,\delta)$-chain complex and the chain map 
\[\Cass(W)\to \CE(\Der_\omega(\cct))\]
can be defined in the same way. In this case, we can also consider the double subcomplex $\Cass(N,Z)$ which consists of $N$-graded graphs without external vertices. 
\end{rem}

\section{Applications and examples}

Examples of relations between our chain map and a known notion are written in the following two Examples.

\begin{ex}
For a cyclic minimal $A_\infty$-algebra $(H,I,m)$ with even degree, putting $W:=H^*[-1]$, we have the map $\Cass(W)\to \CE(\Der_\omega(\cct))$. Here $\cct$ is the dual of the bar construction of $(H,I,m)$. The map induced by the chain map
\[C^{0,\bullet}_\text{ass}(N,Z)\to C^{0,\bullet}_{CE}(\Der_\omega(\cct))=\r\]
is known as the Kontsevich cocycle (\cite{K, PS, HL}) of a cyclic $A_\infty$-algebra $(H,I,m)$. 
\end{ex}

\begin{ex}
In the case of $Z=\{0\}$ and $\delta=0$, the chain map 
\[C_\text{ass}^{\bullet,0}(0,\{0\})\to C^{\bullet,0}_{CE}(\Der_\omega(\cct))^{\Sp(W)}\] is equal to Kontsevich's chain map \cite{K, K2}. 
\end{ex}

In the case that $W$ is positively graded, we define a chain complex $\C_+$ by \[\C_+=\C/(\text{positivity}),\] where the positivity relation is as follows:
\begin{itemize}
\item \textbf{(positivity)} \textbf{(i)} a graph which has a normal vertex $v$ satisfying $|v|\leq 0$ is zero, and \textbf{(ii)} $(i_{v'}+i_{v''})d_v(\Gamma,o)=0$ for an oriented graph $(\Gamma,o)$ and a normal vertex $v$ of degree $0$. 
\end{itemize}
The differentials $d,L$ are also defined on $\C_+$, while $i$ is not. 

\begin{prop}
The operators $d,L$ induce the differentials on $\C_+$.

\end{prop}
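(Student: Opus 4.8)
The plan is to verify that the positivity relation is compatible with the two differentials $d$ and $L$ already defined on $\C$, so that they descend to well-defined operators on the quotient $\C_+ = \C/(\text{positivity})$. Since $d$ and $L$ are known to be differentials on $\C$ by Proposition \ref{dd}, it suffices to show that each of them maps the positivity subspace into itself; once that is established, the induced operators automatically satisfy $d^2=0$, $L^2=0$ and $dL+Ld=0$.

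First I would treat the differential $d$. Let $P$ denote the subspace of $\C$ generated by positivity relations (i) and (ii). For relation (i), take a graph $\Gamma$ with a normal vertex $v$ satisfying $|v|\leq 0$; I would observe that applying $d$ either splits $v$ or splits some other normal vertex $u\neq v$. Splitting at $u$ leaves $v$ untouched, and the degree of $v$ is unchanged, so the resulting graph still carries a normal vertex of degree $\leq 0$ and lies in $P$. Splitting at $v$ itself via $d_v$ produces two vertices $v',v''$ whose degrees sum to $|v|+1$ (the new special structure raising the degree count by one as in the definition of the bidegree $q$); since $|v|\leq 0$ at least one of $v',v''$ has nonpositive degree, so this term also lies in $P$. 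For relation (ii), I would use the commutation identity $i_u d_v = d_v i_u$ for $u\neq v$ stated earlier, together with $d^2=0$, to show $d\bigl((i_{v'}+i_{v''})d_v(\Gamma,o)\bigr)$ reduces to terms of the same form at other vertices, hence lands in $P$.

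Next I would handle $L$, using the identity $L=\id - di$ together with the already-proven relation $iL-Li=0$ in $\C$ and the expression $L(\Gamma,o)=\sum_{v}(i_{v'}+i_{v''})d_v(\Gamma,o)$. The key point is that $L$ raises the homological degree $q$ by one while fixing $p=\#V_n$; I would track how $L$ acts on a graph violating (i) and show the image again contains a nonpositive normal vertex, and that $L$ applied to a relation of type (ii) expands, via $i_u d_v=d_v i_u$, into a sum of type-(ii) relations at the remaining normal vertices. Here I would exploit that converting a normal vertex to a special one (the operator $i_v$) only acts on vertices of degree $-1$, so it cannot raise a nonpositive-degree obstruction out of the positivity subspace.

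The main obstacle I anticipate is the bookkeeping in relation (ii): one must verify carefully that after splitting a degree-$0$ vertex $v$ and contracting the two pieces, the further application of $d$ or $L$ does not produce a term escaping $P$ through some unexpected recombination of the new half-edges $h',h''$. This requires combining the splitting rule for $d_v$, the commutativity $i_u d_v = d_v i_u$, and the $A_\infty$-relation $R_v=0$ that is already imposed in $\C$, and checking the signs line up. Once this compatibility of both generators of $P$ with $d$ and $L$ is confirmed, the proposition follows formally, since the induced operators inherit $d^2=0$, $L^2=0$, and $dL+Ld=0$ from $\C$.
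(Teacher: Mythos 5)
Your overall strategy---checking that $d$ and $L$ preserve the subspace spanned by the positivity relations, so that they descend to $\C_+$ and inherit $d^2=L^2=dL+Ld=0$ from $\C$---is exactly the paper's. But your two central claims about $L$ are false as stated, and the places where they fail are precisely the content of the paper's proof. First, for a graph with a normal vertex $v$ of degree exactly $0$, the term $(i_{v'}+i_{v''})d_v(\Gamma,o)$ occurring in $L(\Gamma,o)$ does \emph{not} contain a nonpositive normal vertex: $i_{v'}$ only acts on splittings with $|v'|=-1$ (turning $v'$ into a special vertex), and since splitting preserves degree, $|v'|+|v''|=|v|=0$, the surviving normal vertex has $|v''|=+1$. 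This term is instead exactly a relation of type (ii); that is why the paper explicitly remarks that compatibility of $L$ with (i) already requires (ii), contrary to your claim that the image ``again contains a nonpositive normal vertex.'' Second, when $L$ is applied to a type-(ii) element, the terms in which $d$ splits the new vertices $v',v''$ neither vanish by positivity nor reduce to type-(ii) relations at other vertices. The paper's computation commutes $i$ past $d$, renames vertices using the antisymmetry $d_{v'}d_v=-d_{v''}d_v$ from the proof of $d^2=0$, and shows that these leftover terms combine into $-R_{v'}d_{v'}d_v-R_{v''}d_{v''}d_v$, which are zero only because the $A_\infty$-relation is already imposed in $\C$. You do name the $A_\infty$-relation as an ingredient in your final paragraph, but you leave this cancellation---the actual heart of the argument---unverified, and your stated conclusion (``a sum of type-(ii) relations at the remaining normal vertices'') is not what the computation yields.

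Two smaller points. For $d$ applied to a type-(ii) relation, the terms $d_{v''}i_{v'}d_v$ and $d_{v'}i_{v''}d_v$ are not ``of the same form at other vertices'': they are killed by relation (i), because $i_{v'}$ forces $|v'|=-1$, hence $|v''|=1$, and splitting $v''$ produces a normal vertex of nonpositive degree. Also, splitting a vertex satisfies $|v'|+|v''|=|v|$, not $|v|+1$; this slip does not affect your conclusion for relation (i) under $d$, but getting the degree bookkeeping right is essential when deciding which terms survive the operators $i_{v'}$, $i_{v''}$.
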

\begin{proof}
It is clear that these operators are compatible with the former condition \textbf{(i)} of the positivity relation. Note that,  to prove compatibility with $L$ for a graph including a vertex with degree $0$, we need to use \textbf{(ii)}. 

We shall prove they are compatible with \textbf{(ii)}. First, we shall calculate the image of \textbf{(ii)} by the operator $d$. For $\Gamma\in \G(N)$ and a normal vertex $v$ of degree $0$, we have
\begin{align*}
d(i_{v'}+i_{v''})d_v&=d_{v''}i_{v'}d_v+d_{v'}i_{v''}d_v+\sum_{u\neq v',v''}d_u(i_{v'}+i_{v''})d_v\\
&=d_{v''}i_{v'}d_v+d_{v'}i_{v''}d_v-\sum_{u\neq v}(i_{v'}+i_{v''})d_vd_u.
\end{align*}
Here we used the equations in the proof of Theorem \ref{dd}. For a splitting of $v$ such that $|v'|=-1$, $d_{v''}i_{v'}d_v$ must have a non-positive vertex since $|v''|=1$. In the same way, $d_{v'}i_{v''}d_v$ also have a non-positive vertex. So $d(i_{v'}+i_{v''})d_v$ is equal to zero under the positivity relation. 

Next, we shall calculate the image of \textbf{(ii)} by the operator $L$:  
\begin{align*}
L(i_{v'}+i_{v''})d_v=&\sum_{u}(i_{u'}+i_{u''})d_u(i_{v'}+i_{v''})d_v\\
=&(i_{(v'')'}+i_{(v'')''})d_{v''}i_{v'}d_v+(i_{(v')'}+i_{(v')''})d_{v'}i_{v''}d_v\\
&-\sum_{u\neq v}(i_{v'}+i_{v''})d_v(i_{u'}+i_{u''})d_u\\
=&(i_{(v'')'}+i_{(v'')''})i_{v'}d_{v''}d_v+(i_{(v')'}+i_{(v')''})i_{v''}d_{v'}d_v\\
&-\sum_{u\neq v}(i_{v'}+i_{v''})d_v(i_{u'}+i_{u''})d_u.
\end{align*}
By changing names of vertices like the proof of Theorem \ref{dd}, we get
\[
(i_{(v'')'}+i_{(v'')''})i_{v'}d_{v''}d_v=-(i_{(v')''}+i_{v''})i_{(v')'}d_{v'}d_v=-R_{v'}d_{v'}d_v-i_{(v')'}i_{v''}d_{v'}d_v,\]
and 
\begin{align*}
&(i_{(v'')'}+i_{(v'')''})i_{v'}d_{v''}d_v+(i_{(v')'}+i_{(v')''})i_{v''}d_{v'}d_v\\
=&-R_{v'}d_{v'}d_v+i_{(v')''}i_{v''}d_{v'}d_v\\
=&-R_{v'}d_{v'}d_v-i_{(v'')'}i_{(v'')''}d_{v''}d_v\\
=&-R_{v'}d_{v'}d_v-R_{v''}d_{v''}d_v
\end{align*}
Using the $A_\infty$-relation, $L(i_{v'}+i_{v''})d_v$ is equal to zero under the positivity relation. 
\end{proof}

Then we can also get the chain map
\[\psi_+:\C_+\to C^\bullet(\Der^+_\omega(LW))\]
induced by $\psi$.

\begin{ex}
Suppose $X=\#_g(S^{n}\times S^n)\setminus \Int D^{2n}$. Its Quillen model is described by:
\[L_X=L(u_1,v_1,\dots,u_g,v_g)\ (\deg u_i=\deg v_i=n-1),\quad \delta=0,\]
\[\omega(u_i,v_j)=\delta_{ij},\ \omega(u_i,u_j)=\omega(v_i,v_j)=0.\]
It means $N=2n-2$, $W=\braket{u_1,v_1,\dots,u_g,v_g}$ and $Z=\{n-1\}$. Then the dgl $(\Der^+_\omega(L_X),0)$ is a Quillen model of $B\aut_{\pt,0}(X)$ (which is proved in \cite{Ber}). In the case, we can forget all special vertices in the graph complex sicne $\delta=0$. So we have the chain map
\[\Ccom(2n-2,\{n-1\})_+/\text{(special vertices)}\to \CE(\Der^+_\omega(L_X))^{\Sp(W)}.\]
This map is constructed by \cite{Ber} and it is proved that the map is an isomorphism under the limit $g\to \infty$.

\end{ex}

\def\d{\mathfrak{d}}
\begin{ex}
Suppose $X=\c P^3\setminus \Int D^6$. Its Quillen model is described by:
\[L_X=L(u_1,u_2)\ (\deg u_i=2i-1),\quad \delta=\frac12 [u_1,u_1]\frac{\pt}{\pt u_2},\]
\[\omega(u_1,u_2)=\omega(u_2,u_1)=1.\]
It means $N=4$, $W=\braket{u_1,u_2}$ and $Z=\{1,3\}$. Then the dgl $(\Der^+_\omega(L_X),\delta)$ is a Quillen model of $B\aut_{\pt,0}(X)$. Since $\Sp(W,\delta)=1$, we have the chain map
\[\C_+\to \CE(\Der^+_\omega(L_X))=\CE(\Der^+_\omega(L_X))^{\Sp(W,\delta)}.\]
We shall define a certain sub dgl $\d$ of $\Der_\omega(L_X)$. Put 
\[A_1=\frac12[u_1,u_1]\frac{\pt}{\pt u_2},\quad A_2=\frac12[u_2,u_2]\frac{\pt}{\pt u_1}\]
\[B_1=\frac12 [u_1,u_1]\frac{\pt}{\pt u_1}+[u_1,u_2]\frac{\pt}{\pt u_2},\quad B_2=[u_1,u_2]\frac{\pt}{\pt u_1}+\frac12 [u_2,u_2]\frac{\pt}{\pt u_2}.\]
Then we have
\[\delta(A_1)=\delta(B_1)=\delta(B_2)=0,\]
\[ \delta(A_2)=\frac12[[u_1,u_1],u_2]\frac{\pt}{\pt u_1}+\frac12[[u_2,u_2],u_1]\frac{\pt}{\pt u_2}=[A_1,A_2]=-[B_1,B_2]=:C,\]
\[[A_i,B_j]=[A_i,A_i]=[B_j,B_j]=0\ (i,j=1,2),\]
\[\deg A_1=-1,\ \deg A_2=5,\ \deg B_1=1,\ \deg B_2=3,\ \deg C=4.\]
Here we put $\delta(Z):=[\delta,Z]$ for simplicity. By the relation above, 
\[\d:=\braket{A_1,A_2,B_1,B_2,C}=\Der^1_\omega(L_X)\oplus \Der^2_\omega(L_X)\]
is a sub dgl of $\Der_\omega(L_X)$. Its positive truncation $\d^+$ is described by
\[\d^+=\braket{A_2,B_1,B_2,C},\]
\[\delta(A_2)=-[B_1,B_2]=C,\ \delta(B_1)=\delta(B_2)=\delta(C)=0,\]
\[[A_2,B_i]=[A_2,A_2]=[B_i,B_i]=[A_2,C]=[B_i,C]=0\ (i=1,2).\]
Let $x,y_1,y_2,z$ be the suspension of the dual basis of $A_2,B_1,B_2,C$. Then the Chevalley-Eilenberg complex of the dgl $\d^+$ is written by
\[\CE(\d^+)=\Lambda(x,y_1,y_2,z)\ (\deg x=6,\ \deg y_1=2,\ \deg y_2=4,\ \deg z=5),\]
\[dx=dy_1=dy_2=0,\ dz=x-y_1y_2\]
and its total cohomology 
\[H_{CE}^\bullet(\d^+)=\Lambda(x,y_1,y_2)/(x-y_1y_2).\]
Since $\d^+$ is the rank $\leq2$ part of $\Der^+_\omega(L_X)$, the map $H_{CE}^\bullet(\Der^+_\omega(L_X))\to H_{CE}^\bullet(\d^+)$ induced by the inclusion has a section. So non-trivial classes in $H_{CE}^\bullet(\d^+)$ gives non-trivial classes in $H_{CE}^\bullet(\Der^+_\omega(L_X))$. 

The relation $dz=x-y_1y_2$ in the Chevalley-Eilenberg complex is corresponding to the relation in the graph complex $\C_+$ described in Figure \ref{example1}. Here the classes $x$ and $y_1y_2$ corresponds to the first term and the sum of the second and third terms in the figure. Remark that $y_1$ and $y_2$ do not correspond to graphs without external vertices. According to the positivity relation, all the trivalent graphs appearing in the right hand side are cycles since the degrees of two half-edges incident to a permitted bivalent vertex in $\C_+$ must be 3. 

\begin{figure}[h]
\begin{center}
\includegraphics[width=7cm]{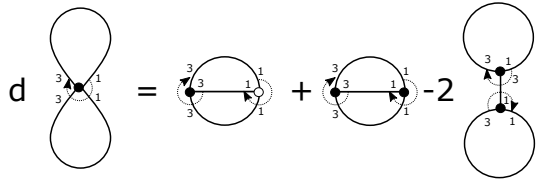}
\caption{the relation of graphs (the orientations are omitted)}
\label{example1}
\end{center}
\end{figure}%

\end{ex}

\begin{ex}
Suppose $X=\c P^4\setminus \Int D^8$. Its Quillen model is described by:
\[L_X=L(u_1,u_2,u_3)\ (\deg u_i=2i-1),\quad \delta=\frac12 [u_1,u_1]\frac{\pt}{\pt u_2}+[u_1,u_2]\frac{\pt}{\pt u_3},\]
\[\omega(u_2,u_2)=\omega(u_1,u_3)=1.\]
It means $N=6$, $W=\braket{u_1,u_2,u_3}$ and $Z=\{1,3,5\}$. Then the dgl $(\Der^+_\omega(L_X),\delta)$ is a Quillen model of $B\aut_{\pt,0}(X)$. Defining the linear transformation $\tau$ by $\tau(u_1)=-u_1$, $\tau(u_2)=u_2$ and $\tau(u_3)=-u_3$, we have $\Sp(W,\delta)=\{1,\tau\}$. So $\C^{\Sp(W,\delta)}$ is generated by graphs labeled by $u_1,u_2,u_3$ satisfying $\#\{u_1,u_3\text{-labeled vertex}\}$ is even. For simplicity, we put 
\[[u_{i_1}\cdots u_{i_k}]:=[u_{i_1},\cdots, u_{i_k}]_\cyc=\sum_{s=1}^k(-1)^{s(k-s)}u_{i_{s+1}}\cdots u_{i_k}u_{i_1}\cdots u_{i_s}\in W_\cyc^k.\]
Using notations in Section \ref{chainmap}, we can take a basis of $W(2)$
\[[u_iu_j]\ (\{i<j\}\subset\{1,2,3\}),\]
a basis of $W(3)$ 
\[\frac13[u_iu_iu_i],\ [u_iu_ju_j],\ [u_iu_iu_j]\ (\{i<j\}\subset\{1,2,3\}),\ [u_1u_2u_3]+[u_1u_3u_2]\]
and a basis of $W(4)$
\[[u_iu_iu_ju_j]\ (i<j),\]\[ [u_1u_1u_2u_3]+[u_1u_1u_3u_2],\ [u_1u_2u_2u_3]-[u_1u_3u_2u_2],\ [u_1u_2u_3u_3]-[u_1u_3u_3u_2].\]
We put the corresponding rank 0, rank 1 and rank 2 basis of $\Der_\omega(L_X)$
\[P_{ij},\ A_{iii},\ A_{ijj},\ A_{iij},\ A_{123},\ B_{iijj},\ B_{1123},\ B_{1223},\ B_{1233},\]
and these dual basis $p_{ij}$, $x_{ijk}$ and $y_{ijkl}$ of $P_{ij}$,\ $A_{ijk}$ and $B_{ijkl}$. Then by direct calculation we have the equations in $\CE(\Der_\omega^+(L_X))$
\begin{align*}
dy_{1122}&=x_{222}-2x_{123}+x_{122}x_{113}-x_{122}x_{122},\\
dy_{2233}&=x_{333}x_{122}+x_{233}x_{222}-x_{223}x_{223}-2x_{123}x_{233}+x_{133}x_{223}+2p_{23}y_{1233},\\
dy_{1133}&=x_{233}-x_{133}x_{113}-x_{123}x_{123}-2p_{23}y_{1123},\\
dy_{1123}&=x_{223}-x_{133}-p_{23}y_{1122},\\
dy_{1223}&=x_{233}+x_{223}x_{122}+x_{123}x_{123}-x_{223}x_{113}-x_{123}x_{222}-x_{133}x_{122}+p_{23}y_{1123},\\
dy_{1233}&=x_{333}+x_{233}x_{122}-x_{123}x_{223}-x_{233}x_{113}-x_{123}x_{122}+p_{23}y_{1133}.
\end{align*}
Here all terms appearing in the right-hand side of the equations are cocycles. For example, the fifth relation is corresponding to the relation in the graph complex $\C_+$ described in Figure \ref{example2}. In Figure \ref{example2}, the image by $\psi_+$ of each graph appearing the last term of the right hand side is zero.

\begin{figure}[h]
\begin{center}
\includegraphics[width=8cm]{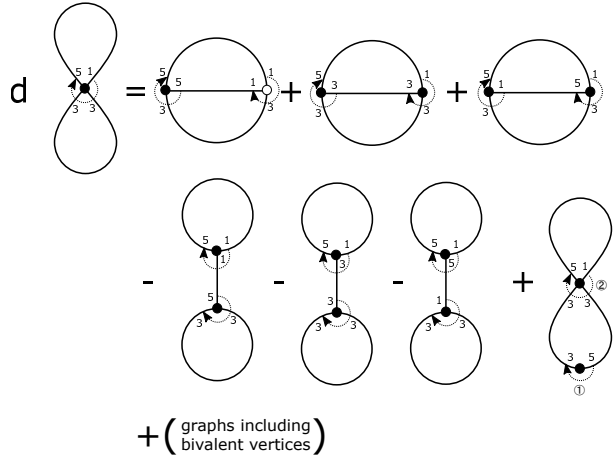}
\caption{the relation of graphs (\textcircled{\scriptsize 1}, \textcircled{\scriptsize 2} mean the orientation of vertices and the other orientations are omitted)}
\label{example2}
\end{center}
\end{figure}%

\end{ex}

\end{document}